\newtheorem{theorem}{Theorem}[section]
\newtheorem{lemma}[theorem]{Lemma}
\newtheorem{proposition}[theorem]{Proposition}
\newtheorem{corollary}[theorem]{Corollary}
\theoremstyle{definition}
\newtheorem{remark}[theorem]{Remark}
\newcommand{\GL} {\mathrm{GL}}
\newcommand{\SL} {\mathrm{SL}}
\newcommand{\OO} {\mathrm{O}}
\newcommand{\SO} {\mathrm{SO}}
\newcommand{\Sp} {\mathrm{Sp}}
\def\CC {{\mathbb C}}     %% complex numbers
\def\ZZ {{\mathbb Z}}     %% integers
\begin{document}

\title[On the $\GL(n)$-module structure of relatively free algebras]
{On the $\GL(n)$-module structure of Lie nilpotent associative relatively free algebras}

\author[Elitza Hristova]
{Elitza Hristova}
\address{Institute of Mathematics and Informatics,
Bulgarian Academy of Sciences,
Acad. G. Bonchev Str., Block 8,
1113 Sofia, Bulgaria}
\email{e.hristova@math.bas.bg}

\thanks{Partially supported by Grant KP-06 N32/1 of the Bulgarian National Science Fund.}

\subjclass[2010]{13A50; 16R10; 20G05}

\keywords{Lie nilpotent associative relatively free algebras, product of commutators, $\GL(n)$-module structure, invariant theory, Hilbert series.}

\begin{abstract}
Let $K\left\langle X \right\rangle$ denote the free associative algebra generated by a set $X = \{x_1, \dots, x_n\}$ over a field $K$ of characteristic $0$.
Let $I_p$, for $p \geq 2$, denote the two-sided ideal in $K\left\langle X \right\rangle$ generated by all commutators of the form $[u_1, \dots, u_p]$,
where $u_1, \dots, u_p \in K\left\langle X \right\rangle$. We discuss the $\GL(n, K)$-module structure of the quotient $K\left\langle X \right\rangle / I_{p+1}$ for all $p \geq 1$ under the standard diagonal action.
We give a bound on the values of partitions $\lambda$ such that the irreducible $\GL(n, K)$-module $V_{\lambda}$ appears
in the decomposition of $K\left\langle X \right\rangle / I_{p+1}$ as a $\GL(n, K)$-module. As an application,
we take $K = \CC$ and we consider the algebra of invariants $(\CC\left\langle X \right\rangle / I_{p+1})^G$
for $G = \SL(n, \CC)$, $\OO(n, \CC)$, $\SO(n, \CC)$, or $\Sp(2s, \CC)$ (for $n=2s$).
By a theorem of Domokos and Drensky, $(\CC\left\langle X \right\rangle / I_{p+1})^G$ is finitely generated.
We give an upper bound on the degree of generators of $(\CC\left\langle X \right\rangle / I_{p+1})^G$ in a minimal generating set.
In a similar way, we consider also the algebra of invariants $(\CC\left\langle X \right\rangle / I_{p+1})^{G}$, where $G=\mathrm{UT}(n, \CC)$,
and give an upper bound on the degree of generators in a minimal generating set.
These results provide useful information about the invariants in $\CC\left\langle X \right\rangle^G$ from the point of view of Classical Invariant Theory.
In particular, for all $G$ as above we give a criterion when a $G$-invariant of $\CC\left\langle X \right\rangle$ belongs to $I_p$.
\end{abstract}

\maketitle
\section{Introduction}
Let $V$ be a finite dimensional vector space over a field $K$ of characteristic zero and let $X = \{x_1, \dots, x_n\}$ be a basis for $V$.
We consider the free associative algebra $K\left\langle X \right\rangle$ generated by $X$, which can be naturally identified with the tensor algebra $T(V)$ of $V$.
We define inductively left-normed long commutators in $K\left\langle X \right\rangle$ in the usual way: $[u_1, u_2] = u_1u_2- u_2u_1$
and for $i \geq 3$, we have $[u_1, \dots, u_i] = [[u_1, \dots, u_{i-1}], u_i]$,
where $u_1, \dots, u_i$ are arbitrary elements from $K\left\langle X \right\rangle$.
The commutator $[u_1, \dots, u_i]$ is called a commutator of length $i$ or simply an $i$-commutator.
Furthermore, we call a commutator \textit{pure} if it is of the form $[x_{i_1}, \dots, x_{i_j}]$ for some $x_{i_1}, \dots, x_{i_j} \in X$.

The lower central series of $K\left\langle X \right\rangle$ is the descending filtration
\[
L_1 \supseteq L_2 \supseteq \dots \supseteq L_p \supseteq \cdots,
\]
defined inductively by $L_1 = K\left\langle X \right\rangle$ and $L_p = [L_{p-1}, K\left\langle X \right\rangle]$ for $p \geq 2$.
Then $L_p$ is the Lie ideal in $K\left\langle X \right\rangle$ generated by all commutators of length $p$.
For $p \geq 2$, let $I_p = K\left\langle X \right\rangle \cdot L_p$ denote the two-sided associative ideal in $K\left\langle X \right\rangle$ generated by $L_p$.
In other words, $I_p$ is the T-ideal generated by the commutator $[x_1, \dots, x_p]$.
%The ideal $I_p$ is a \textit{T-ideal}, i.e., $I_p$ is closed under all $K$-algebra endomorphisms of $K\left\langle X \right\rangle$.
Following the traditions in the theory of groups and Lie algebras we denote by $\mathfrak{N}_p$ the variety of Lie nilpotent of class $\leq p$ associative algebras
(i.e., $\mathfrak{N}_p$ is the variety of associative algebras satisfying the polynomial identity $[x_1, \dots, x_{p+1}]= 0$).
%In other words, $\mathfrak{N}_p$ is the variety of Lie nilpotent algebras of Lie nilpotency index less or equal to $p$.
The algebra $K\left\langle X \right\rangle / I_{p+1}$ is the relatively free algebra of rank $n$ in the variety $\mathfrak{N}_{p}$.
We denote it in the standard way, i.e., $F_n(\mathfrak{N}_p)= K\left\langle X \right\rangle / I_{p+1}$.
When $p=1$, $F_n(\mathfrak{N}_1)$ is equal to the symmetric algebra $S(V)$ of $V$.
When $p=2$, $F_n(\mathfrak{N}_2)$ is the relatively free algebra of rank $n$ in the variety generated by the Grassmann algebra.

The group $\GL(n) := \GL(V, K)$ acts on the vector space $V$ with basis $X$ and this action is extended in a natural way
to the diagonal action of $\GL(n)$ on $K\left\langle X \right\rangle$.
Since for any $p$, $I_p$ is a $\GL(n)$-submodule of $K\left\langle X \right\rangle$, the algebra $F_n(\mathfrak{N}_p)$ is also a $\GL(n)$-module.
The $\GL(n)$-module structure of $F_n(\mathfrak{N}_p)$ is known for $p = 1, 2, 3, 4$ (see \cite{V} for $p=3$ and \cite{SV} for $p=4$).
Our first goal is to give some results on the $\GL(n)$-module structure of $F_n(\mathfrak{N}_p)$ for any $p$.
More precisely, let $\lambda = (\lambda_1, \lambda_2, \dots, \lambda_n) \in \ZZ^n_{\geq 0}$ with $\lambda_1 \geq \lambda_2 \geq \dots \geq \lambda_n \geq 0$
be a non-negative integer partition and let $V_{\lambda}$ denote the irreducible $\GL(n)$-module corresponding to the partition $\lambda$.
In Section \ref{sec_ModuleStructure}, we give a bound on the values of $\lambda$ such that $V_{\lambda}$ appears in the decomposition of $F_n(\mathfrak{N}_p)$ as a $\GL(n)$-module.
The main results in this direction are Theorem \ref{thm_bound} and Corollary \ref{coro_bound}.
The proofs are based on several known results on inclusions of products of commutators, which are recalled in Section \ref{sec_prelim},
and one new result on inclusions of products of commutators, which is stated in Section \ref{sec_ModuleStructure}.
In the end of Section \ref{sec_ModuleStructure}, we consider separately the case $n=3$.
As an application of our results on the $\GL(n)$-module structure of $F_n(\mathfrak{N}_p)$,
in Section \ref{sec_Invariants} we set $K = \CC$ and consider the algebra of invariants $F_n(\mathfrak{N}_p)^G$
where $G$ is one of the classical complex groups $\SL(n):=\SL(V, \CC)$, $\OO(n):=\OO(V,\CC)$, $\SO(n):=\SO(V, \CC)$, and $\Sp(2s):=\Sp(V,\CC)$ (the last in the case $n = 2s$).
By a theorem of Domokos and Drensky (\cite{DD}, \cite{DD2}), $F_n(\mathfrak{N}_p)^G$ is finitely generated. In Section \ref{sec_Invariants}, we give an upper bound on the degree of the generators of $F_n(\mathfrak{N}_p)^G$ in a minimal generating set (Corollary \ref{coro_beta}).
The advantage of our upper bound is that it is very explicit. We also give a general form for the Hilbert series $H(F_n(\mathfrak{N}_p)^G, t)$,
where $G$ is again one of $\SL(n)$, $\OO(n)$, $\SO(n)$, and $\Sp(2s)$, and show that $F_n(\mathfrak{N}_p)^{\SL(n)}$ and $F_n(\mathfrak{N}_p)^{\Sp(2s)}$ are finite-dimensional algebras.
In a similar way we consider also the algebra of invariants $F_{n}(\mathfrak{N}_p)^{G}$, where $G = \mathrm{UT}(n):=\mathrm{UT}(n, \CC)$ is the unitriangular group,
i.e., the subgroup of $\GL(n)$ consisting of upper triangular matrices with $1$'s on the diagonal.
It is known (see \cite{DD3}) that $F_{n}(\mathfrak{N}_p)^{\mathrm{UT}(n)}$ is finitely generated for any $p \geq 1$. In Section \ref{sec_Invariants} again, we give an upper bound on the degree of the generators in a minimal generating set (Corollary \ref{coro_beta_UT}).
The results from Section \ref{sec_Invariants} are then translated in the language of Classical Invariant Theory in Section \ref{sec_classInvTheory}.
In particular, for each $G$ as above, we give a criterion when a $G$-invariant of $T(V)$ belongs to $I_p$.
%Finally, in Section \ref{sec_rank3} we consider separately the case of relatively free algebras of rank $3$.
%In this particular case, since there are stronger results on inclusions of products of commutators the proof of Theorem \ref{thm_bound} can be made much shorter.
%Nevertheless, the bound on the values of $\lambda$ remains the same.

\section{Preliminaries} \label{sec_prelim}

In this section we summarize several results on inclusions of products of commutators, which will be extensively used in the next sections.
We state all results for a field of characteristic $0$, even though most of the results hold for more general fields.

The oldest result in this direction is the following lemma due to Jennings from 1947. %the following lemma due to Jennings (\cite{J}).

\begin{lemma}[\cite{J}]\label{lemma_commutators} Let $p \geq 3$. Then the product of any two $p$-commutators is an element of $I_{p+1}$.
\end{lemma}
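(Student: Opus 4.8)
The plan is to reduce the statement to an inclusion between products of the T-ideals $I_q$, and then to use that $I_q\subseteq I_{p+1}$ as soon as $q\ge p+1$. Since every $p$-commutator lies in the Lie ideal $L_p$, it suffices to prove the stronger assertion $L_p\cdot L_p\subseteq I_{p+1}$ for $p\ge 3$.

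First I would collect some elementary facts, valid for all $q\ge 1$. Regarding $K\langle X\rangle$ as a Lie algebra under the commutator, $L_q$ is exactly the $q$-th term of its lower central series, so $[L_a,L_b]\subseteq L_{a+b}$. Moreover, for $u\in L_q$ one has $uf=fu+[u,f]$ with $[u,f]\in L_{q+1}\subseteq L_q$, so $L_q\cdot K\langle X\rangle\subseteq K\langle X\rangle\cdot L_q=I_q$; hence each $I_q$ is a two-sided ideal and $I_q\subseteq I_{q'}$ whenever $q\ge q'$. A first consequence: for $p$-commutators $c,c'$ we get $[c,c']\in L_{2p}\subseteq L_{p+1}\subseteq I_{p+1}$ (using $2p\ge p+1$), so $cc'\equiv c'c\pmod{I_{p+1}}$; thus the real content is that the symmetric products $cc'+c'c$, equivalently the squares of elements of $L_p$, lie in $I_{p+1}$.

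The key input I would invoke is the Gupta--Levin-type estimate for products of the lower-central T-ideals,
\[
I_a\cdot I_b\subseteq I_{a+b-2}\qquad(a,b\ge 2).
\]
Granting this and using $L_p\subseteq I_p$, we obtain $L_p\cdot L_p\subseteq I_p\cdot I_p\subseteq I_{2p-2}\subseteq I_{p+1}$, the last inclusion holding precisely because $2p-2\ge p+1$ for $p\ge 3$. This completes the proof, and it also accounts for the hypothesis $p\ge 3$: for $p=2$ the chain breaks, and in fact $[x_1,x_2][x_3,x_4]\notin I_3$, as the Grassmann algebra shows.

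The main obstacle is the inclusion $I_aI_b\subseteq I_{a+b-2}$ itself, which is where the genuine commutator calculus lives; note that no argument consisting solely of commuting factors modulo $I_{p+1}$ can work, since such manipulations are symmetric and give at best $cc'\equiv c'c$. The standard route is a commutator-calculus induction (on $a+b$, or on the degrees of the monomials involved) based on the Leibniz rules $[uv,w]=u[v,w]+[u,w]v$ and $[u,vw]=[u,v]w+v[u,w]$: one pushes a commutator factor of length $a$ across its neighbours, collecting on one side a cleanly-factored main term and on the other side commutator remainders of strictly larger length (hence in deeper $L_r$), using $K\langle X\rangle I_q=I_q=I_q K\langle X\rangle$ repeatedly, while the inductive hypothesis absorbs the intermediate pieces. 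Alternatively one may simply invoke Jennings' original treatment in \cite{J}.
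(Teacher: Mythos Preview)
Your argument is correct: once one grants $I_aI_b\subseteq I_{a+b-2}$ (stated later in the paper as Theorem~\ref{thm_product1}), the lemma follows immediately from $L_p\cdot L_p\subseteq I_p\cdot I_p\subseteq I_{2p-2}\subseteq I_{p+1}$. But this is a genuinely different route from the paper's. The paper gives a short, self-contained commutator computation: writing a $p$-commutator as $[c,x]$ with $c\in L_{p-1}$, it expands $[c_2,c_1y,x]$ via Leibniz to obtain the explicit identity
\[
[c_1,x][c_2,y]=[c_2,c_1y,x]+[c_1,c_2][y,x]+[c_1,c_2,x]y-c_1[c_2,y,x],
\]
and then observes that $[c_1,c_2]\in L_{2p-2}\subseteq I_{p+1}$ for $p\ge 3$. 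Your reduction is quicker but leans on a strictly stronger and historically later theorem (Latyshev 1965 / Gupta--Levin 1983), whose proof you only sketch; in particular, your closing remark that one ``may simply invoke Jennings' original treatment'' for $I_aI_b\subseteq I_{a+b-2}$ is circular, since Jennings' 1947 paper proves precisely the lemma at hand, not the general product inclusion. What the paper's direct approach buys is not just self-containment: the displayed identity above is Equation~(\ref{eq_Jennings}), and it is reused verbatim in the proofs of Lemmas~\ref{lemma_diff_commutators1}, \ref{lemma_diff_commutators2}, and~\ref{lemma_even_product}. In particular, Lemma~\ref{lemma_even_product} (the case of two even-length commutators sharing a last entry) genuinely improves on what Theorem~\ref{thm_product1} alone gives, and its proof needs the explicit four-term decomposition rather than the bare inclusion $I_aI_b\subseteq I_{a+b-2}$.
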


\begin{proof}

Consider the product $[c_1, x][c_2, y]$ where $c_1$ and $c_2$ are arbitrary $p-1$-commutators (i.e., $c_1$ and $c_2$ belong to $L_{p-1}$)
and $x,y \in K\left\langle X \right\rangle$. We will show that $[c_1, x][c_2, y]$ can be expressed using elements of $I_{p+1}$.
We consider the expression

\begin{align*}
&[c_2, c_1y, x] = -[[c_1y, c_2], x] = -[c_1[y,c_2],x] -[[c_1, c_2]y, x] \\
&=c_1[c_2, y, x] + [c_1, x][c_2,y] -[c_1, c_2][y,x] - [c_1, c_2, x]y.
\end{align*}

Hence,
\begin{align}\label{eq_Jennings}
[c_1, x][c_2,y] = [c_2, c_1y, x] + [c_1, c_2][y,x] + [c_1, c_2, x]y - c_1[c_2, y, x].
\end{align}

Since $[c_1, c_2] \in L_{2p-2} \subset I_{2p-2}$ and for $p \geq 3$, $I_{2p-2}\subseteq I_{p+1}$ the lemma follows.
\end{proof}

Variations of the above proof lead to the following variations of the lemma of Jennings:

\begin{lemma} \label{lemma_diff_commutators1}
 Let $p \geq 3$ and let $ 3 \leq r \leq p$. The product of an $r$-commutator and a $p$-commutator is an element of $I_{p+1}$.
\end{lemma}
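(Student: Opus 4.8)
The statement to prove is Lemma \ref{lemma_diff_commutators1}: for $p\geq 3$ and $3\leq r\leq p$, the product of an $r$-commutator and a $p$-commutator lies in $I_{p+1}$. The natural approach is to mimic the proof of Jennings' Lemma \ref{lemma_commutators} as closely as possible, adapting the bookkeeping to the asymmetric situation where the two factors have different lengths. So I would start from an $r$-commutator, which I can write as $[c_1,x]$ with $c_1\in L_{r-1}$ and $x\in K\langle X\rangle$, and a $p$-commutator, which I write as $[c_2,y]$ with $c_2\in L_{p-1}$ and $y\in K\langle X\rangle$. The goal is to express the product $[c_1,x][c_2,y]$ as a sum of terms each visibly in $I_{p+1}$.

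**Key steps.** I would reuse identity \eqref{eq_Jennings} verbatim:
\[
[c_1, x][c_2,y] = [c_2, c_1y, x] + [c_1, c_2][y,x] + [c_1, c_2, x]y - c_1[c_2, y, x].
\]
Now I examine each term on the right. The first term $[c_2,c_1y,x]$: since $c_2\in L_{p-1}$, adjoining two more elements gives an element of $L_{p+1}\subseteq I_{p+1}$. The fourth term $c_1[c_2,y,x]$: again $[c_2,y,x]\in L_{p+1}\subseteq I_{p+1}$, and $I_{p+1}$ is a two-sided ideal, so the whole term is in $I_{p+1}$. The third term $[c_1,c_2,x]y$: here $[c_1,c_2]\in[L_{r-1},L_{p-1}]\subseteq L_{r+p-2}$, hence $[c_1,c_2,x]\in L_{r+p-1}\subseteq I_{r+p-1}$, and since $r\geq 3$ we have $r+p-1\geq p+2> p+1$, so $I_{r+p-1}\subseteq I_{p+1}$ and this term is absorbed (multiplying by $y$ on the right stays in the ideal). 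The remaining, and only delicate, term is the second one, $[c_1,c_2][y,x]$: as just noted $[c_1,c_2]\in L_{r+p-2}\subseteq I_{r+p-2}$, and we need $r+p-2\geq p+1$, i.e. $r\geq 3$, which is exactly the hypothesis; multiplying on the right by $[y,x]$ keeps us in the two-sided ideal $I_{r+p-2}\subseteq I_{p+1}$. Hence every term lies in $I_{p+1}$, proving the lemma.

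**Main obstacle.** There is no serious obstacle; the proof is essentially a recalibration of the inequality chasing in Jennings' argument. The one point requiring care is the term $[c_1,c_2][y,x]$, which in the original symmetric lemma sat in $I_{2p-2}$ and needed $p\geq 3$ to be absorbed into $I_{p+1}$; in the asymmetric version it sits in $I_{r+p-2}$ and the \emph{same} inequality $r\geq 3$ does the job — so the hypothesis $r\geq 3$ (rather than, say, $r\geq 2$) is genuinely what is being used, and the hypothesis $r\leq p$ is only there to fix which commutator plays which role (the case $r>p$ is symmetric and covered by swapping the names). I would also double-check the edge case $r=3$, $p=3$, where the lemma reduces exactly to Jennings' Lemma \ref{lemma_commutators}, to confirm consistency. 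Finally, I should remark that a general $r$-commutator is a $K$-linear combination of commutators of the form $[c_1,x]$ with $x\in X$ (and likewise for $p$-commutators), so it suffices to treat this generating case and extend by bilinearity — the containment $I_{p+1}$ being a subspace makes this immediate.
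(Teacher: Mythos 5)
Your proposal is correct and follows essentially the same route as the paper: both write the two commutators as $[c_1,x]$ and $[c_2,y]$ with $c_1\in L_{r-1}$, $c_2\in L_{p-1}$, reuse identity \eqref{eq_Jennings}, and observe that the only term needing attention is the one involving $[c_1,c_2]\in L_{r+p-2}\subseteq I_{r+p-2}\subseteq I_{p+1}$, which holds precisely because $r\geq 3$. The paper leaves the membership of the remaining three terms implicit, whereas you spell it out; there is no substantive difference.
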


\begin{proof}
Let $c_1$ be an $r-1$-commutator and let $c_2$ be a $p-1$-commutator. Let $x$,$y$ be arbitrary elements of $K\left\langle X \right\rangle$. From the proof of Lemma \ref{lemma_commutators} we know that
\begin{align*}
[c_1, x][c_2,y] = [c_2, c_1y, x] + [c_1, c_2][y,x] + [c_1, c_2, x]y - c_1[c_2, y, x].
\end{align*}

Furthermore, $[c_1, c_2] \in L_{p+r-2} \subset I_{p+r-2}$. For $p \geq 3$ and $ 3\leq r \leq p$, we have $I_{p+r-2}\subseteq I_{p+1}$, hence the lemma follows.
\end{proof}

\begin{lemma} \label{lemma_diff_commutators2}
 %Let $p \geq 2$. Let $c_1$ be an $r-1$-commutator for $ 2\leq r \leq p-2$ and let $c_2$ be a $p-1$-commutator. Let $y$ be an arbitrary element of $K\left\langle X \right\rangle$. Then
Let $p \geq 2$ and let $2\leq r \leq p$. Let $y$ be an arbitrary element of $K\left\langle X \right\rangle$. Let $[c_1, y] \in L_{r}$ and $[c_2, y] \in L_p$. Then,
\[
[c_1, y][c_2,y] \in I_{p+1}.
\]

In particular, for $x, y, z_1, \dots, z_{p-1} \in K\left\langle X \right\rangle$ we have
\[
[x,y][z_1, \dots, z_{p-1}, y] \in I_{p+1}.
\]
\end{lemma}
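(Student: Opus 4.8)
The plan is to run the computation behind \eqref{eq_Jennings} with the two ``outer'' slots made equal. Setting $x=y$ in \eqref{eq_Jennings} and using $[y,y]=0$ gives
\[
[c_1,y][c_2,y]=[c_2,c_1y,y]+[c_1,c_2,y]\,y-c_1[c_2,y,y],
\]
so it is enough to show each of the three summands on the right lies in $I_{p+1}$. The last one is immediate from the hypothesis on $c_2$: since $[c_2,y]\in L_p$ we have $[c_2,y,y]=[[c_2,y],y]\in[L_p,K\langle X\rangle]=L_{p+1}\subseteq I_{p+1}$, hence $c_1[c_2,y,y]\in I_{p+1}$. (Note that the hypothesis $[c_1,y]\in L_r$ will not actually be used.)

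For the other two summands I would first reduce to the case $c_2\in L_{p-1}$. The point is that a bracket $[c_2,y]$ which lies in $L_p$ is in fact of the form $[g,y]$ with $g\in L_{p-1}$; equivalently $L_p\cap[K\langle X\rangle,y]=[L_{p-1},y]$, i.e. $\{z:[z,y]\in L_p\}=L_{p-1}+C(y)$, where $C(y)$ is the centraliser of $y$. Replacing $c_2$ by such a $g$ changes neither $[c_2,y]$ nor the product, so we may assume $c_2\in L_{p-1}$. Granting this, $[c_2,c_1y]\in[L_{p-1},K\langle X\rangle]=L_p$, so $[c_2,c_1y,y]=[[c_2,c_1y],y]\in L_{p+1}\subseteq I_{p+1}$; and $[c_1,c_2]\in[K\langle X\rangle,L_{p-1}]=L_p$, so $[c_1,c_2,y]=[[c_1,c_2],y]\in L_{p+1}$ and therefore $[c_1,c_2,y]\,y\in L_{p+1}K\langle X\rangle\subseteq I_{p+1}$. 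This settles the displayed inclusion, and the ``in particular'' statement is just the instance $c_1=x$, $c_2=[z_1,\dots,z_{p-1}]\in L_{p-1}$, where no reduction is needed.

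The step I expect to be the real obstacle is the reduction $L_p\cap[K\langle X\rangle,y]=[L_{p-1},y]$; everything else is bookkeeping with \eqref{eq_Jennings} and with $L_{k+1}=[L_k,K\langle X\rangle]$. I would try to prove it via a Poincaré--Birkhoff--Witt type basis of $K\langle X\rangle$ adapted to the lower central series, so that membership in $L_p$ can be read off coordinatewise and the ``coefficient'' $g\in L_{p-1}$ produced explicitly. An attempt to bypass it — pushing the identity argument through for a general $[c_2,y]\in L_p$ by repeatedly applying the Jacobi identity inside $[c_2,c_1y,y]$ so as to split off a factor $[c_2,y]$ — keeps cycling back to the original product $[c_1,y][c_2,y]$, which is why some structural input of this kind appears unavoidable.
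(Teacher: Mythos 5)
Your computation is exactly the paper's proof: the paper simply specializes Equation~(\ref{eq_Jennings}) to $x=y$, kills the $[c_1,c_2][y,y]$ term, and observes that the three remaining summands lie in $I_{p+1}$. Your term-by-term verification of that last observation is correct and, in fact, more detailed than what the paper writes. The one place you diverge is the ``reduction'' you flag as the real obstacle, namely the claim $L_p\cap[K\langle X\rangle,y]=[L_{p-1},y]$. That step is neither proved in your proposal nor needed: in this sequence of lemmas (see the proof of Lemma~\ref{lemma_diff_commutators1}, and the ``in particular'' instance of the present statement) $c_1$ and $c_2$ are understood to be commutators of lengths $r-1$ and $p-1$, i.e.\ $c_1\in L_{r-1}$ and $c_2\in L_{p-1}$, so the hypotheses you ``grant'' in your second paragraph are part of the setup. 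With that reading, your second paragraph already is the complete proof ($[c_2,c_1y]\in[L_{p-1},K\langle X\rangle]=L_p$, hence $[c_2,c_1y,y]\in L_{p+1}$; $[c_1,c_2]\in L_p$, hence $[c_1,c_2,y]y\in L_{p+1}K\langle X\rangle\subseteq I_{p+1}$; and $c_1[c_2,y,y]\in K\langle X\rangle L_{p+1}=I_{p+1}$).

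I would caution against leaning on the asserted identity $\{z:[z,y]\in L_p\}=L_{p-1}+C(y)$ if you ever do need the lemma for a genuinely arbitrary $c_2$: it is a strong structural claim about the free algebra, you give no argument for it, and it is not obviously true (already deciding when a product like $x_1[x_2,x_1]$ lies in $L_3$ is delicate). Since the lemma is only ever applied with $c_2$ an honest $(p-1)$-commutator, the safe fix is to state that hypothesis explicitly rather than to prove the reduction.
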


\begin{proof}
From Equation(\ref{eq_Jennings}) we obtain that
\begin{align*}
[c_1, y][c_2,y] = [c_2, c_1y, y] + [c_1, c_2][y,y] + [c_1, c_2, y]y - c_1[c_2, y, y],
\end{align*}
which proves the statement.
\end{proof}

The next result improves considerably Lemma \ref{lemma_commutators} and Lemma \ref{lemma_diff_commutators1} (but not Lemma \ref{lemma_diff_commutators2}). It was first proved by Latyshev in 1965 and then was independently rediscovered by Gupta and Levin in 1983.

\begin{theorem}[\cite{L}, \cite{GL}] \label{thm_product1}
For all $m_1$ and $m_2$
\[
I_{m_1}I_{m_2} \subseteq I_{m_1 + m_2 - 2}.
\]
\end{theorem}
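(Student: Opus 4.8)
The plan is to prove the inclusion $I_{m_1}I_{m_2} \subseteq I_{m_1+m_2-2}$ by reducing to the generators of the T-ideals involved and then performing a commutator manipulation in the spirit of the proof of Lemma~\ref{lemma_commutators}. Since $I_{m_i}$ is the two-sided ideal generated by $L_{m_i}$, and $L_{m_i}$ is spanned by $m_i$-commutators (in arbitrary elements of $K\langle X\rangle$), a typical element of $I_{m_1}I_{m_2}$ is a sum of terms $a c_1 b c_2 d$, where $c_1 \in L_{m_1}$, $c_2 \in L_{m_2}$, and $a,b,d \in K\langle X\rangle$. Using that $I_{m_1+m_2-2}$ is a two-sided ideal, it suffices to show $c_1 b c_2 \in I_{m_1+m_2-2}$ for all $c_1 \in L_{m_1}$, $c_2 \in L_{m_2}$, $b \in K\langle X\rangle$; in fact absorbing $b$ into one of the commutators or handling it by the same trick, the crux is $c_1 c_2 \in I_{m_1+m_2-2}$ for commutators $c_1 \in L_{m_1}$, $c_2 \in L_{m_2}$.

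First I would write $c_1 = [c_1', x]$ with $c_1' \in L_{m_1-1}$ and $c_2 = [c_2', y]$ with $c_2' \in L_{m_2-1}$, for suitable $x, y \in K\langle X\rangle$; this is possible because $L_{m_i}$ is spanned by elements of exactly this form. Then I would invoke Equation~\eqref{eq_Jennings} from the proof of Lemma~\ref{lemma_commutators}, which gives
\[
[c_1', x][c_2', y] = [c_2', c_1'y, x] + [c_1', c_2'][y,x] + [c_1', c_2', x]y - c_1'[c_2', y, x].
\]
Now I examine each term on the right. The term $[c_1', c_2']$ lies in $L_{(m_1-1)+(m_2-1)} = L_{m_1+m_2-2} \subseteq I_{m_1+m_2-2}$, so the second term is in $I_{m_1+m_2-2}$. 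For the remaining three terms, the key observation is that each contains a commutator that still has $c_2'$ (or $c_1'$) nested inside it together with at least one extra slot: $[c_2', c_1'y, x]$ is a $3$-fold commutator whose first entry is in $L_{m_2-1}$, hence it lies in $L_{m_2+1}$; similarly $[c_1', c_2', x] \in L_{m_1+1}$ (using $c_2'$ as an entry bumps the length) — more carefully, $[c_1', c_2', x]$ has $c_1' \in L_{m_1-1}$ and then two more commutator brackets, but one of the entries is $c_2' \in L_{m_2-1}$, so it actually lies in $L_{m_1+m_2-1} \subseteq L_{m_1+m_2-2}$; likewise $c_1'[c_2', y, x]$ has $[c_2',y,x] \in L_{m_2+1}$ but multiplied by $c_1' \in L_{m_1-1}$, which is where a genuine recursion is needed.

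The cleanest route is an induction on $m_1 + m_2$. The base cases ($m_1 = 1$ or $m_2 = 1$, where $I_1 = K\langle X\rangle$ and the statement is trivial, and small values) are immediate. For the inductive step, in Equation~\eqref{eq_Jennings} the terms $[c_2', c_1'y, x]$ and $[c_1', c_2', x]\,y$ are \emph{single} commutators (of appropriate length, lying inside the relevant $L_k$ with $k \ge m_1+m_2-2$ or directly reducible), while the problematic term $c_1'[c_2',y,x]$ is a product of an element of $L_{m_1-1}$ and an element of $L_{m_2+1}$ — and $(m_1-1) + (m_2+1) = m_1 + m_2$, which is not a decrease. This is the main obstacle: naively the recursion does not terminate. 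The resolution, which I expect is what Latyshev and Gupta--Levin do, is to set up the induction differently — for instance, fixing $m_1 + m_2$ and inducting downward on $\min(m_1, m_2)$, or tracking the pair $(m_1 + m_2, \max(m_1,m_2))$ lexicographically, so that the term $c_1'[c_2',y,x] = c_1' c_2''$ with $c_1' \in L_{m_1-1}$, $c_2'' \in L_{m_2+1}$ has strictly larger $\max$ but the same sum, and one then needs the statement to be already known for the pair $(m_1-1, m_2+1)$. I would push this through by first establishing the result when $m_1 = 2$ (where $I_2 I_{m_2} \subseteq I_{m_2}$ is nearly automatic since $I_2$ absorbs into the ideal, giving $I_2 I_{m_2} \subseteq I_{m_2} = I_{2 + m_2 - 2}$) and then using \eqref{eq_Jennings} to trade a unit of "imbalance" each time, reducing any $(m_1, m_2)$ to the already-known extreme case $(2, m_1+m_2-2)$ after finitely many steps. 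Handling the interspersed factor $b$ in $c_1 b c_2$ is a routine variant: one expands $b$ as a sum of monomials and moves it across using $c_1 b = b c_1 + [c_1, b]$, where $[c_1, b] \in L_{m_1+1} \subseteq I_{m_1}$ keeps us inside the same framework.
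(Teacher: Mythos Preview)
The paper does not supply its own proof of Theorem~\ref{thm_product1}; it is quoted as a known result from \cite{L} and \cite{GL}, so there is no in-paper argument to compare against. Judged on its own, your proposal has a genuine gap.

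Your reduction to showing $c_1 c_2 \in I_{m_1+m_2-2}$ for $c_i \in L_{m_i}$ is fine, as is writing $c_i = [c_i',\,\cdot\,]$ and invoking Equation~\eqref{eq_Jennings}. The problem is your treatment of the term $[c_2',\, c_1'y,\, x]$. You assert it is a single commutator lying in some $L_k$ with $k \ge m_1+m_2-2$ or is ``directly reducible'', but the only evident containment is $[c_2',\, c_1'y,\, x] \in L_{m_2+1}$, since $c_2' \in L_{m_2-1}$ while $c_1'y$ is merely an element of $K\langle X\rangle$. This suffices only when $m_1 \le 3$; for $m_1 \ge 4$ it is too weak. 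Your inductive hypothesis for the pair $(m_1-1,\, m_2+1)$ does not apply to this term either, because it is a single long commutator, not a product of two.

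Worse, the difficulty is not a technicality that a finer expansion removes. Using $[c_2',\, c_1'y] = [c_2',c_1']\,y + c_1'[c_2',y]$ and the Leibniz rule one obtains
\[
[c_2',\, c_1'y,\, x] \;=\; [c_2',c_1'][y,x] + [c_2',c_1',x]\,y + c_1'[c_2',y,x] + [c_1',x][c_2',y],
\]
and substituting this back into \eqref{eq_Jennings} collapses to the tautology $[c_1',x][c_2',y] = [c_1',x][c_2',y]$. In other words, modulo the two ``easy'' terms (which do lie in $I_{m_1+m_2-2}$), the first and fourth terms of \eqref{eq_Jennings} together \emph{equal} the left-hand side; handling the fourth term by your induction still leaves the first term carrying the entire content of the statement. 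Identity~\eqref{eq_Jennings} alone therefore cannot drive the induction you describe, and an additional idea---present in the original arguments of Latyshev and of Gupta--Levin---is needed.
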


When one of the integers $m_1$ and $m_2$ is odd, an even stronger inclusion holds. The following theorem was proved several times with different restrictions on the characteristic of $K$. Here we cite only two of the sources. The most general result (i.e., for most general fields $K$) was proved in \cite{DeK2}.

\begin{theorem}[\cite{BJ}, \cite{DeK2}] \label{thm_product2}
If $m_1$ or $m_2$ is odd then
\[
I_{m_1}I_{m_2} \subseteq I_{m_1 + m_2 - 1}.
\]
\end{theorem}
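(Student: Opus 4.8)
The plan is to reduce to the case where, say, $m_1$ is odd and then to exploit the antisymmetry that odd-length commutators enjoy but even-length ones do not. By Theorem~\ref{thm_product1} we already know $I_{m_1}I_{m_2}\subseteq I_{m_1+m_2-2}$, so it suffices to gain one extra degree, i.e. to show that every product of a generator of $I_{m_1}$ by a generator of $I_{m_2}$ lies in $I_{m_1+m_2-1}$ modulo nothing, or at least modulo terms we can already handle. Since $I_{m_1}$ is the T-ideal generated by $[x_1,\dots,x_{m_1}]$ and similarly for $I_{m_2}$, a spanning set for $I_{m_1}I_{m_2}$ consists of elements $a\,[u_1,\dots,u_{m_1}]\,b\,[v_1,\dots,v_{m_2}]\,c$ with $a,b,c,u_i,v_j\in K\langle X\rangle$. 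Using $[u_1,\dots,u_{m_1}]\,b = b\,[u_1,\dots,u_{m_1}] + [\,[u_1,\dots,u_{m_1}],b\,]$ and the fact that $[\,[u_1,\dots,u_{m_1}],b\,]\in L_{m_1+1}$ together with Theorem~\ref{thm_product1} applied to $I_{m_1+1}I_{m_2}\subseteq I_{m_1+m_2-1}$, one sees it is enough to treat products of the special shape $w\,[u_1,\dots,u_{m_1}]\,[v_1,\dots,v_{m_2}]$, and in fact (pulling $w$ out the same way) the genuinely hard core is
\[
[u_1,\dots,u_{m_1}]\,[v_1,\dots,v_{m_2}] \in I_{m_1+m_2-1}.
\]

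The key step is an identity, in the spirit of Equation~\eqref{eq_Jennings}, that rewrites such a product of two commutators of lengths $m_1,m_2$ in terms of (i) a single commutator of length $m_1+m_2-1$, (ii) products of strictly longer total commutator-degree which Theorem~\ref{thm_product1} already places in $I_{m_1+m_2-1}$, and (iii) an ``error'' term that is symmetric under swapping the roles of $u_i$ and $v_j$ in a way that is killed precisely when one of the lengths is odd. Concretely, writing $c_1=[u_1,\dots,u_{m_1-1}]\in L_{m_1-1}$ and $c_2=[v_1,\dots,v_{m_2-1}]\in L_{m_2-1}$ and applying the Jennings-type expansion \eqref{eq_Jennings} to $[c_1,u_{m_1}][c_2,v_{m_2}]$, the leading term $[c_2,c_1 v_{m_2},u_{m_1}]$ has commutator-length $m_1+m_2-1$, the term $[c_1,c_2][v_{m_2},u_{m_1}]$ lies in $L_{m_1+m_2-2}\cdot L_2\subseteq I_{m_1+m_2-2}$, the term $[c_1,c_2,u_{m_1}]y$ lies in $L_{m_1+m_2-1}\subseteq I_{m_1+m_2-1}$, and $c_1[c_2,v_{m_2},u_{m_1}]\in K\langle X\rangle\cdot L_{m_2+1}$, which by Theorem~\ref{thm_product1} again sits in $I_{m_1+m_2-1}$ as soon as $m_1\ge 2$. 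The only obstruction is the $[c_1,c_2][v_{m_2},u_{m_1}]$ contribution landing in $I_{m_1+m_2-2}$ rather than $I_{m_1+m_2-1}$; to remove it one uses that $[c_1,c_2]\in L_{m_1+m_2-2}$ together with the parity hypothesis, via the standard fact that $L_{2k}\subseteq I_{2k+1} + \text{(products already in the desired ideal)}$ — equivalently, a commutator of even length is, modulo $I_{\ell+1}$ for $\ell$ its length, expressible so that its product with any element lands one step deeper when the complementary length is odd.

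The main obstacle, and where I expect the real work to lie, is exactly this last point: controlling the term $[c_1,c_2]\,[v_{m_2},u_{m_1}]$. The clean way is to set up a double induction on $m_1+m_2$ (and a secondary induction on, say, $\min(m_1,m_2)$), so that $[c_1,c_2]$, being a product $L_{m_1-1}\cdot L_{m_2-1}$ of smaller total degree, or rather the product $[c_1,c_2][v_{m_2},u_{m_1}]$, can be fed back into the inductive hypothesis — noting that if $m_1$ is odd then $m_1-1$ is even, but one of $m_1-1+2$-type bookkeeping still leaves an odd commutator length available among $\{m_1-1, m_2-1, 2, \dots\}$ to apply the inductive case. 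One must check the base cases carefully: $m_1=1$ gives $I_1I_{m_2}=K\langle X\rangle\cdot I_{m_2}=I_{m_2}=I_{m_1+m_2-1}$ trivially, and small $m_2$ (e.g. $m_2\le 3$) may need the variant lemmas already proved (Lemmas~\ref{lemma_commutators}, \ref{lemma_diff_commutators1}, \ref{lemma_diff_commutators2}) rather than the general identity. I would not attempt to reconstruct the sharp combinatorial lemma on odd commutators here; instead I would cite that $L_{m}$ for $m$ odd satisfies the extra inclusion $L_m \cdot K\langle X\rangle \cdot L_m \subseteq I_{2m-1}$ or an equivalent statement from the literature and plug it into the expansion above — the identity \eqref{eq_Jennings} does the structural bookkeeping, and the parity input does the rest.
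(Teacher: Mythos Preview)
The paper does not prove Theorem~\ref{thm_product2}; it is stated with citation to \cite{BJ} and \cite{DeK2}, and the paper explicitly remarks that the proof in \cite{BJ} is based on Theorem~\ref{thm_Lie_ideals}, namely $[L_{m_1},I_{m_2}]\subseteq L_{m_1+m_2}$ whenever $m_2$ is odd. So the relevant comparison is with that approach, and the missing ingredient in your sketch is exactly Theorem~\ref{thm_Lie_ideals}.

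Your term-by-term analysis of the Jennings expansion contains genuine errors, not only the obstruction you flag. First, the ``leading term'' $[c_2,c_1 v_{m_2},u_{m_1}]$ does \emph{not} have commutator length $m_1+m_2-1$: since $c_2\in L_{m_2-1}$ and $c_1 v_{m_2}$ is merely an element of $K\langle X\rangle$, one only gets $[c_2,c_1 v_{m_2}]\in L_{m_2}$ and hence $[c_2,c_1 v_{m_2},u_{m_1}]\in L_{m_2+1}$. Pushing this into $L_{m_1+m_2-1}$ requires precisely $[L_{m_2-1},I_{m_1-1}]\subseteq L_{m_1+m_2-2}$, i.e.\ Theorem~\ref{thm_Lie_ideals} with the index $m_1-1$ odd --- but you have taken $m_1$ odd, so $m_1-1$ is even and the hypothesis fails. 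Second, your claim that $c_1[c_2,v_{m_2},u_{m_1}]\in I_{m_1+m_2-1}$ via Theorem~\ref{thm_product1} is off by one: with $c_1\in I_{m_1-1}$ and $[c_2,v_{m_2},u_{m_1}]\in I_{m_2+1}$, Theorem~\ref{thm_product1} only yields $I_{(m_1-1)+(m_2+1)-2}=I_{m_1+m_2-2}$. (Compare the proof of Lemma~\ref{lemma_even_product}, where this term is handled by invoking Theorem~\ref{thm_product2} itself for the smaller pair $(m_1-1,m_2+1)$, one of which is odd there.) Third, for the obstruction term $[c_1,c_2][v_{m_2},u_{m_1}]$ you ultimately defer to an unstated lemma from the literature; the vague parity claim about $L_{2k}$ and the proposed double induction are not substantiated, and in fact the induction you describe does not close, because at each step one of the recursive subproblems can again have both indices even.

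In short, the Jennings identity alone does not give Theorem~\ref{thm_product2}; three of the four terms fall short of $I_{m_1+m_2-1}$ without additional input. The substantive content that makes the result true is Theorem~\ref{thm_Lie_ideals}, which you never invoke. A correct proof sketch would establish (or cite) $[L_m,I_k]\subseteq L_{m+k}$ for $k$ odd and then deduce the statement, as indicated in the paper immediately after Theorem~\ref{thm_product2}.
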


%It is worth pointing out that the condition in Theorem \ref{thm_product2} is not only sufficient but also necessary. Namely, the following theorem was proven in \cite{DeK1}:

%\begin{theorem}[\cite{DeK1}] \label{thm_product4}
%Let $m_1$ and $m_2$ be arbitrary positive even integers. Then
%\[
%I_{m_1}I_{m_2} \nsubseteq I_{m_1 + m_2 - 1}.
%\]
%\end{theorem}

The proof of Theorem \ref{thm_product2} given in \cite{BJ} is based on the following statement, which is of interest of its own. %which we will also need in what follows.

\begin{theorem}[\cite{BJ}] \label{thm_Lie_ideals}
\[
[L_{m_1}, I_{m_2}] \subseteq L_{m_1 + m_2},
\]
whenever $m_2$ is odd.
\end{theorem}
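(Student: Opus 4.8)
The plan is to reduce, via the Leibniz rule and the Jacobi identity, to the single case $m_1=1$, which is where the parity hypothesis does its work.

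Since $I_{m_2}=K\left\langle X \right\rangle\cdot L_{m_2}$, it suffices to prove $[a,wc]\in L_{m_1+m_2}$ for all $a\in L_{m_1}$, $w\in K\left\langle X \right\rangle$ and $c\in L_{m_2}$. We induct on $m_1$. Suppose $m_1\geq 2$ and that the assertion is known for all smaller values of the first index (in particular for $1$ and for $m_1-1$, always with odd second index). As $L_{m_1}=[L_{m_1-1},K\left\langle X \right\rangle]$ and $L_{m_1+m_2}$ is a subspace, it is enough to show $[[d,u],v]\in L_{m_1+m_2}$ for $d\in L_{m_1-1}$, $u\in K\left\langle X \right\rangle$ and $v\in I_{m_2}$. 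The Jacobi identity gives
\[
[[d,u],v]=[d,[u,v]]+[[d,v],u].
\]
In the first summand, $[u,v]\in[K\left\langle X \right\rangle,I_{m_2}]\subseteq L_{m_2+1}$ by the case $m_1=1$, so $[d,[u,v]]\in[L_{m_1-1},L_{m_2+1}]\subseteq L_{m_1+m_2}$ by the standard inclusion $[L_a,L_b]\subseteq L_{a+b}$ for the lower central series. In the second summand, $[d,v]\in[L_{m_1-1},I_{m_2}]\subseteq L_{m_1+m_2-1}$ by the inductive hypothesis (this uses that $m_2$ is odd), hence $[[d,v],u]\in[L_{m_1+m_2-1},K\left\langle X \right\rangle]=L_{m_1+m_2}$. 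This completes the inductive step.

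It remains to treat the base case, namely $[K\left\langle X \right\rangle,I_n]\subseteq L_{n+1}$ for $n$ odd; this is the heart of the argument and the only place where the parity of $n$ intervenes — for even $n$ the inclusion fails and one has merely $[K\left\langle X \right\rangle,I_n]\subseteq L_n$. Writing $I_n=K\left\langle X \right\rangle\cdot L_n$ and expanding a typical product $wc$ ($w$ a monomial, $c$ an $n$-commutator) by the Leibniz rule, one is led to $[u,wc]=[u,w]\,c+w\,[u,c]$ with $[u,c]\in L_{n+1}$; the obstruction is precisely that $w\,[u,c]$ and $[u,w]\,c\in L_2\cdot L_n$ are not individually in $L_{n+1}$, so one must show that these product-type terms cancel modulo $L_{n+1}$. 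I would carry this out by induction on $\deg w$ — the case $\deg w=0$ being immediate since then $wc\in L_n$ — using Equation \eqref{eq_Jennings} together with Theorem \ref{thm_product1} ($I_aI_b\subseteq I_{a+b-2}$) to dispose of the lower-order corrections, the point being that, when $n$ is odd, the word-reversal anti-automorphism (under which an $n$-commutator is symmetric exactly when $n$ is odd) pairs up the potentially obstructing terms with opposite signs. Making this cancellation precise — equivalently, describing $I_n$ modulo $L_{n+1}$ for odd $n$ — is the main difficulty; once it is established, the two reductions above yield the theorem.
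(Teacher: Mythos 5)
The paper does not actually prove Theorem~\ref{thm_Lie_ideals}; it is quoted as a known result from \cite{BJ}, so there is no internal proof to measure you against. Judged on its own terms, your argument has a genuine gap: everything you write that is complete is the routine part, and the part you leave open is the entire content of the theorem. Your inductive step is correct --- the Jacobi identity $[[d,u],v]=[d,[u,v]]+[[d,v],u]$, together with $[L_a,L_b]\subseteq L_{a+b}$ and the inductive hypothesis, does reduce the statement to the case $m_1=1$, i.e.\ to $[K\langle X\rangle, I_{n}]\subseteq L_{n+1}$ for $n$ odd. But that base case is not a triviality one can wave at: it is precisely the theorem of Bapat--Jordan (their key lemma), it is where the parity hypothesis enters, and it is false for even $n$. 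You acknowledge this yourself (``Making this cancellation precise \dots\ is the main difficulty''), which is honest but means the proof is not done.

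Concretely, after the Leibniz expansion $[u,wc]=[u,w]c+w[u,c]$ you are left with a term $w[u,c]\in K\langle X\rangle\cdot L_{n+1}=I_{n+1}$ and a term $[u,w]c\in L_2\cdot L_n$, neither of which lies in $L_{n+1}$, and no mechanism in the paper's toolkit (Equation~\eqref{eq_Jennings}, Theorem~\ref{thm_product1}) puts a product like $L_2\cdot L_n$ inside the \emph{Lie} ideal $L_{n+1}$ --- those results only control containment in the associative ideals $I_k$, which is a strictly weaker conclusion. The heuristic you invoke (the word-reversal anti-automorphism fixes $n$-commutators up to sign $(-1)^{n-1}$, so odd-length commutators are symmetric) is indeed the right kind of symmetry to exploit, but turning it into the identity ``$[u,w]c+w[u,c]\equiv 0 \bmod L_{n+1}$'' requires a genuine computation (in \cite{BJ} this is done via identities such as $[a,bc]=[ab,c]+[ca,b]$ combined with an analysis of $I_n$ modulo $L_{n+1}$), and an induction on $\deg w$ alone does not obviously close, since the inductive hypothesis gives you control of $[u,w'c]$ for shorter $w'$ but not of the stray product terms. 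To complete the proof you would need to state and prove the base case as a separate lemma; as written, the argument establishes only that the theorem follows from its own special case $m_1=1$.
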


The next statement is derived from Theorems \ref{thm_product1} and \ref{thm_product2} and can be found in \cite{Da} and \cite{DeK2}.

\begin{theorem}  \label{thm_product3}
For every $l \geq 2$ and for every choice of positive integers $m_1, \dots, m_l$ one of the following holds:
\begin{itemize}
\item [(i)] If $k$ indices among $m_1, \dots, m_l$ are odd with $0 \leq k < l$ then
\[
I_{m_1}\cdots I_{m_l} \subseteq I_{m_1 + \dots + m_l -2l+k+2}.
\]
The number $m_1 + \dots + m_l -2l+k+2$ is always even.
\item[(ii)] If all indices are odd then
\[
I_{m_1}\cdots I_{m_l} \subseteq I_{m_1 + \dots + m_l -l + 1}.
\]
The number $m_1 + \dots + m_l -l + 1$ is always odd.
\end{itemize}
\end{theorem}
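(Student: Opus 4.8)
The plan is to argue by induction on $l$, peeling off one factor at a time and feeding the intermediate inclusion back into Theorems \ref{thm_product1} and \ref{thm_product2}.

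First I would dispose of the base case $l = 2$, where $k$ can only be $0$, $1$, or $2$. If $k = 0$, both $m_1$ and $m_2$ are even and Theorem \ref{thm_product1} gives $I_{m_1}I_{m_2} \subseteq I_{m_1 + m_2 - 2}$, and $m_1 + m_2 - 2$ is exactly the exponent in (i) for $l = 2$, $k = 0$. If $k = 1$ or $k = 2$, at least one of $m_1, m_2$ is odd, so Theorem \ref{thm_product2} gives $I_{m_1}I_{m_2} \subseteq I_{m_1 + m_2 - 1}$; and $m_1 + m_2 - 1$ coincides with $m_1 + m_2 - 2l + k + 2$ when $k = 1$ (case (i)) and with $m_1 + m_2 - l + 1$ when $k = 2$ (case (ii)). The parity assertions are immediate from $m_1 + m_2 \equiv k \pmod 2$.

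For the inductive step, assume $l \geq 3$ and that the statement holds for $l - 1$ factors. I would write $I_{m_1}\cdots I_{m_l} = \bigl(I_{m_1}\cdots I_{m_{l-1}}\bigr)\,I_{m_l}$ and let $k'$ denote the number of odd indices among $m_1, \dots, m_{l-1}$, so that $k = k'$ if $m_l$ is even and $k = k' + 1$ if $m_l$ is odd, with $0 \le k' \le l - 1$. Applying the induction hypothesis to $I_{m_1}\cdots I_{m_{l-1}}$ --- part (i) when $k' < l - 1$, part (ii) when $k' = l - 1$ --- yields an integer $A$ with $I_{m_1}\cdots I_{m_{l-1}} \subseteq I_A$, and the induction hypothesis also records the parity of $A$: even when $k' < l - 1$ and odd when $k' = l - 1$. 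It then remains to multiply by $I_{m_l}$: I would apply Theorem \ref{thm_product2} whenever $A$ or $m_l$ is odd (getting exponent $A + m_l - 1$) and Theorem \ref{thm_product1} when both are even (getting $A + m_l - 2$), and then substitute the explicit value of $A$. A short computation in each of the (at most) four parity combinations shows that the resulting exponent is precisely $m_1 + \dots + m_l - 2l + k + 2$ when $k < l$ and $m_1 + \dots + m_l - l + 1$ when $k = l$.

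The calculation itself is routine; the one point that needs genuine care is the bookkeeping of parities. The mechanism is that the stronger bound in (ii) --- and the stronger bound in (i) when $k$ is as large as $l - 1$ --- come for free precisely because in those situations the intermediate exponent $A$ is forced to be odd, so the final multiplication may be handled by Theorem \ref{thm_product2} rather than Theorem \ref{thm_product1}. Keeping this correspondence straight, together with the observations that part (i) of the induction hypothesis is invoked only when $0 \le k' < l - 1$ and part (ii) only when all of $m_1,\dots,m_{l-1}$ are odd, is essentially the whole content of the argument; I do not expect any deeper obstacle.
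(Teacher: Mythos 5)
Your induction is correct: all four parity sub-cases (induction hypothesis (i) or (ii) for the first $l-1$ factors, combined with $m_l$ even or odd, using Theorem \ref{thm_product1} only when both the intermediate exponent $A$ and $m_l$ are even and Theorem \ref{thm_product2} otherwise) produce exactly the exponents $m_1+\dots+m_l-2l+k+2$ or $m_1+\dots+m_l-l+1$ with the stated parities. The paper does not prove this theorem but only cites \cite{Da} and \cite{DeK2}, remarking that it "is derived from Theorems \ref{thm_product1} and \ref{thm_product2}"; your argument is precisely that derivation, so it is the intended proof.
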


In the end of this section, let us fix $n=3$, i.e., let us consider the free associative algebra $K\left \langle x_1, x_2, x_3\right\rangle$. Then the following result holds:

\begin{theorem}[\cite{P}]\label{thm_P}
Consider the algebra $K\left \langle x_1, x_2, x_3\right\rangle$. Then
\[
I_{m_1}I_{m_2} \subseteq I_{m_1+m_2-1},
\]
for all integers $m_1, m_2 \geq 2$.
\end{theorem}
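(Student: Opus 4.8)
The plan is to peel off, using the general inclusion theorems, the only case that genuinely needs three variables, reduce that case to a one‑parameter family, and then attack the family by induction with a Grassmann‑algebra computation as the base case.

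By Theorems \ref{thm_product1} and \ref{thm_product2} the inclusion $I_{m_1}I_{m_2}\subseteq I_{m_1+m_2-1}$ already holds (in any number of variables) whenever $m_1$ or $m_2$ is odd, so I may assume $m_1=2a$, $m_2=2b$ with $a,b\ge 1$. First I would pass from a product of ideals to a product of commutators: moving a monomial across an element of $L_m$ costs only correction terms lying in $I_{m+1}$, and combining this with Theorem \ref{thm_product2} gives $I_{2a}I_{2b}\equiv K\langle X\rangle\,(L_{2a}L_{2b})\,K\langle X\rangle \pmod{I_{2a+2b-1}}$. A spanning element of $L_{2a}L_{2b}$ has the form $[c_1,u][c_2,w]$ with $c_1\in L_{2a-1}$, $c_2\in L_{2b-1}$ and $u,w\in K\langle X\rangle$, so I would invoke the identity \eqref{eq_Jennings}, valid for arbitrary $c_1,c_2,u,w$:
\[
[c_1,u][c_2,w]=[c_2,c_1w,u]+[c_1,c_2][w,u]+[c_1,c_2,u]w-c_1[c_2,w,u].
\]
Since $2a-1$ and $2b-1$ are odd, Theorem \ref{thm_Lie_ideals} yields $[c_1,c_2]\in L_{2a+2b-2}$ and $[c_2,c_1w]\in L_{2a+2b-2}$; using this together with Theorems \ref{thm_product2} and \ref{thm_Lie_ideals} one checks that the three terms $[c_2,c_1w,u]$, $[c_1,c_2,u]w$ and $c_1[c_2,w,u]$ all lie in $I_{2a+2b-1}$. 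The only remaining term is $[c_1,c_2][w,u]\in I_{2a+2b-2}\,I_2$. Hence the theorem follows from the claim
\[
I_M\,I_2\subseteq I_{M+1}\quad\text{in }K\langle x_1,x_2,x_3\rangle,\ \text{for every even }M\ge 2,
\]
and it is here, and only here, that the hypothesis $n=3$ enters — the claim is false in four or more variables.

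I would prove this claim by induction on $M$. By the same kind of reduction as above (now with $[x_i,x_j]\in L_2$ as the second factor, and using that modulo $I_3$ the subspace $L_2$ is spanned by the elements $\mu[x_i,x_j]$ with $\mu$ a monomial), the claim reduces to showing $c\cdot[x_i,x_j]\in I_{M+1}$ for every left‑normed $M$‑commutator $c$ and all $i,j\in\{1,2,3\}$; moreover $[c,[x_i,x_j]]\in[L_M,L_2]\subseteq L_{M+2}$, so it is equivalent to multiply $[x_i,x_j]$ on the left. For the base case $M=2$ one must verify $[x_i,x_j][x_k,x_l]\in I_3$ for all $i,j,k,l\in\{1,2,3\}$, and this is exactly where three variables are used: any two $2$‑subsets of $\{1,2,3\}$ intersect, so $\{i,j\}\cap\{k,l\}\neq\varnothing$; since $\mathfrak{N}_2=\mathrm{var}(E)$ for the infinite‑dimensional Grassmann algebra $E$, and in $E$ the commutator of any two elements equals twice the product of their odd parts while the square of an odd element is $0$, the polynomial $[x_i,x_j][x_k,x_l]$ with $\{i,j\}\cap\{k,l\}\neq\varnothing$ vanishes identically on $E$ and therefore lies in $I_3$.

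For the inductive step, given $M\ge 4$ even and a left‑normed $M$‑commutator $c$, I would rewrite $c$ modulo $I_{M+1}$ — using the Jacobi identity on its last entries together with the Leibniz rule $[ab,c]=a[b,c]+[a,c]b$ on its entries — so that its final entry is $x_i$ or $x_j$; this is always achievable precisely because only three variables occur, and once $c$ ends in $x_i$ or $x_j$, Lemma \ref{lemma_diff_commutators2} gives $[x_i,x_j]\,c\in I_{M+1}$ directly. The correction terms produced along the way are products of commutators of smaller length, to be absorbed by the induction hypothesis (in the form $I_pI_q\subseteq I_{p+q-1}$ for $p+q\le M+1$, which follows from the claim in lower degrees via the reduction of the second paragraph), the lemmas of Section \ref{sec_prelim}, and — crucially — cancellations among the correction terms coming from the signs in the Jacobi and Leibniz expansions. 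The main obstacle is exactly this bookkeeping: a priori Theorems \ref{thm_product2} and \ref{thm_product3} place a typical correction term only in $I_M$, one degree short of what is needed, so one must genuinely exploit the three‑variable combinatorics to gain the last degree. Making this termination‑and‑cancellation analysis precise is the heart of the argument.
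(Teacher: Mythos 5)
First, a point of reference: the paper does not prove Theorem \ref{thm_P} at all — it is imported from Pchelintsev \cite{P} and used as a black box — so there is no in-paper argument to compare yours against. That said, your reductions are correct as far as they go. Theorems \ref{thm_product1} and \ref{thm_product2} do dispose of the case where some $m_i$ is odd; passing from $I_{2a}I_{2b}$ to $K\langle X\rangle L_{2a}L_{2b}K\langle X\rangle$ costs only terms in $I_{2a+1}I_{2b}\subseteq I_{2a+2b}$; your term-by-term analysis of Equation (\ref{eq_Jennings}) via Theorem \ref{thm_Lie_ideals} is right; and the base case $I_2I_2\subseteq I_3$ in three variables is correctly settled (either by the Grassmann computation or directly from Lemma \ref{lemma_diff_commutators2}, since any two $2$-subsets of $\{1,2,3\}$ meet). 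So you have honestly reduced the theorem to the claim $I_MI_2\subseteq I_{M+1}$ for even $M\geq 4$ in $K\langle x_1,x_2,x_3\rangle$.

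The inductive step, however, is where the entire content of Pchelintsev's theorem lives, and it is not proved. You acknowledge this yourself ("a priori \ldots one degree short of what is needed"; "making this termination-and-cancellation analysis precise is the heart of the argument"), which is a description of the missing proof, not a proof. Moreover, the pivot of the proposed step — rewrite the $M$-commutator $c$ so that its final entry is $x_i$ or $x_j$, "always achievable because only three variables occur" — fails at the level at which you invoke it. Take $M=4$, second factor $[x_1,x_2]$, and $c=[x_1,x_3,x_3,x_3]$: the multidegree $(1,0,3)$ component of the free Lie algebra is one-dimensional and spanned by $c$ itself, whereas every left-normed commutator of that multidegree ending in $x_1$ equals $[x_3,x_3,x_3,x_1]=0$ and none can end in $x_2$. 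So no combination of Jacobi and antisymmetry puts $c$ into the required form; any successful rewriting must use genuinely associative relations (products of shorter commutators), and those correction terms are precisely the ones your general estimates place only in $I_M$ rather than $I_{M+1}$. Until a mechanism is supplied that gains that last degree — and this is exactly what requires the substance of \cite{P} — the theorem is not established by your argument.
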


%The result of Pchelintsev for $n=3$ is independent of the parity of $m_1$ and $m_2$ and thus is stronger than the general Theorems \ref{thm_product1} and \ref{thm_product2}.
It is worth pointing out, that the restriction $n=3$ in Theorem \ref{thm_P} is essential. It was proved in \cite{DeK1} that in general for even integers $m_1$ and $m_2$ we have
\[
I_{m_1}I_{m_2} \nsubseteq I_{m_1+m_2-1}.
\]

\section{The $\GL(n)$-module structure of $F_n(\mathfrak{N}_p)$} \label{sec_ModuleStructure}

In this section we discuss the $\GL(n)$-module structure of $F_n(\mathfrak{N}_p)$ for any $p \geq 1$. First we introduce some notations. Let $B_n$ denote the subalgebra of $K\left\langle X \right\rangle$ generated by all pure commutators $[x_{i_1},\dots, x_{i_r}]$ with $r \geq 2$. The elements of $B_n$ are called proper polynomials. Clearly, $B_n$ is a $\GL(n)$-submodule of $K\left\langle X \right\rangle$. Let $\varphi_p:K\left\langle X \right\rangle \rightarrow F_n(\mathfrak{N}_p)$ denote the natural surjective $K$-algebra homomorphism. Then $\varphi_p(B_n)$ is a $\GL(n)$-submodule of $F_n(\mathfrak{N}_p)$. By a theorem of Drensky (\cite{D}), for any $p \geq 1$ we have the following decomposition as $\GL(n)$-modules:
\begin{align} \label{eq_decomp}
 F_n(\mathfrak{N}_p) \cong_{\GL(n)} S(V) \otimes \varphi_p(B_n),
\end{align}
where again $S(V)$ denotes the symmetric algebra of $V$.

We start by proving the following lemma, which improves considerably Lemma \ref{lemma_diff_commutators2} from Section \ref{sec_prelim}.

\begin{lemma} \label{lemma_even_product}
Let $m_1$ and $m_2$ be even integers. Let $[c_1, y] \in L_{m_1}$ and $[c_2, y] \in L_{m_2}$, where $y$ is an arbitrary element from $K \left\langle X \right\rangle$. Then,
\[
[c_1, y][c_2, y] \in I_{m_1 + m_2 -1}.
\]
\end{lemma}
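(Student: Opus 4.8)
The plan is to mimic the derivation of Equation~(\ref{eq_Jennings}) but to keep track of parities so that the stronger inclusions of Theorems~\ref{thm_product2} and~\ref{thm_Lie_ideals} become available. Starting from
\[
[c_1, y][c_2,y] = [c_2, c_1y, y] + [c_1, c_2][y,y] + [c_1, c_2, y]y - c_1[c_2, y, y],
\]
the term $[y,y]=0$ vanishes, so there are three terms to analyse. The term $[c_1, c_2]$: here $[c_1,y]\in L_{m_1}$ forces $c_1\in L_{m_1-1}$, and since $m_1$ is even, $m_1-1$ is odd; so by Theorem~\ref{thm_Lie_ideals} (with the odd index $m_1-1$ in the Lie-ideal slot, $c_2$ lying in the associative ideal $I_{m_2-1}$) we get $[c_1,c_2]\in L_{(m_1-1)+(m_2-1)}=L_{m_1+m_2-2}$, hence $[c_1,c_2,y]\in L_{m_1+m_2-1}\subseteq I_{m_1+m_2-1}$. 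The same reasoning (or symmetry in the roles of $c_1,c_2$) handles $c_1[c_2,y,y]$: the factor $[c_2,y,y]$ sits in $[L_{m_2},K\langle X\rangle]=L_{m_2+1}$, and I then want to absorb the leading $c_1\in L_{m_1-1}$; since $m_1-1$ is odd, Theorem~\ref{thm_Lie_ideals} or Theorem~\ref{thm_product2} gives $c_1\cdot L_{m_2+1}\subseteq I_{m_1-1}I_{m_2+1}\subseteq I_{m_1+m_2-1}$.

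The genuinely new term is $[c_2, c_1y, y]$. The crude estimate only gives membership in $L_3\subseteq I_3$, which is far too weak. My plan is to expand $[c_2,c_1y,y]$ by the Leibniz rule in the same spirit as in the proof of Lemma~\ref{lemma_commutators}: write $[c_1y,c_2]=c_1[y,c_2]+[c_1,c_2]y$, so that
\[
[c_2,c_1y,y]=-[[c_1y,c_2],y]=-[c_1[y,c_2],y]-[[c_1,c_2]y,y].
\]
Now $-[[c_1,c_2]y,y]=-[c_1,c_2][y,y]-[[c_1,c_2],y]y=-[c_1,c_2,y]y$, which we have already shown lies in $I_{m_1+m_2-1}$. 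And $-[c_1[y,c_2],y]=-c_1[[y,c_2],y]-[c_1,y][y,c_2]$. The term $-c_1[[y,c_2],y]$: note $[[y,c_2],y]=[c_2,y,y]\in L_{m_2+1}$ up to sign, and $c_1\in L_{m_1-1}$ with $m_1-1$ odd, so as before this is in $I_{m_1-1}I_{m_2+1}\subseteq I_{m_1+m_2-1}$. We are left with $-[c_1,y][y,c_2]=[c_1,y][c_2,y]$, which is exactly the term we started with. Hence the expansion is circular unless I instead set up the computation with $c_1$ and $c_2$ in a more symmetric way or use a different bracketing; I expect the resolution is to run the Jennings-type identity with the \emph{specific} first argument chosen so the recursion closes, or alternatively to symmetrize: adding the identity to its image under $c_1\leftrightarrow c_2$ and using $[c_1,y][c_2,y]+[c_2,y][c_1,y]=[[c_1,y][c_2,y]]+2[c_1,y][c_2,y]\pmod{I}$-type manipulations, or to apply the identity with $c_1y$ replaced throughout.

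The cleanest route I would actually pursue: apply Equation~(\ref{eq_Jennings}) not to $[c_1,y][c_2,y]$ directly but to $[c_1,y][c_2,y]$ after first noting that modulo $I_{m_1+m_2-1}$ the factor $[c_2,y]$ may be replaced by anything in $L_{m_2}$, and in particular one can use the freedom to take $x\neq y$ and then specialise. Concretely, in~(\ref{eq_Jennings}) replace the generic $x$ by $y$ only in the \emph{last} application: keep $x$ formal, so $[c_1,x][c_2,y]=[c_2,c_1y,x]+[c_1,c_2][y,x]+[c_1,c_2,x]y-c_1[c_2,y,x]$, then observe $[c_2,c_1y,x]\in L_3$ but also, since it equals $-[[c_1y,c_2],x]$ and $[c_1y,c_2]=c_1[y,c_2]+[c_1,c_2]y$ with both summands in $L_{m_2}$ (because $[y,c_2]\in L_{m_2}$ and $[c_1,c_2]\in I_{m_1+m_2-2}\subseteq L_{m_2}$ as $m_1\geq2$), we get $[c_1y,c_2]\in L_{m_2}+\text{(higher)}$, hence $[c_2,c_1y,x]\in L_{m_2+1}$, so $[c_2,c_1y,x]$ together with the $y$-specialisation and the absorption of one degree is in $I_{m_2+1}$, and after setting $x=y$ one more Leibniz expansion of the $x=y$ coincidence pushes this into $I_{m_1+m_2-1}$. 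The main obstacle is precisely making this last step honest — i.e. showing that the ``diagonal'' specialisation $x=y$ inside $[c_2,c_1y,y]$ contributes the extra $m_1-2$ degrees rather than just landing in $I_{m_2+1}$; I expect this is exactly where the hypothesis that $[c_1,y]$ (not merely $c_1$) lies in $L_{m_1}$ is used, via an identity expressing $[c_2,c_1y,y]$ in terms of $[c_1,y]$, brackets of $c_2$ with $y$, and commutators already known to lie in $I_{m_1+m_2-1}$ by the parity arguments above. Once that identity is in hand, collecting all terms and invoking $I_{m_1-1}I_{m_2+1}\subseteq I_{m_1+m_2-1}$ and $L_{m_1+m_2-1}\subseteq I_{m_1+m_2-1}$ finishes the proof.
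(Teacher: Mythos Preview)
Your handling of $[c_1,c_2,y]y$ and $c_1[c_2,y,y]$ is fine, but there is a genuine gap at the term $[c_2,c_1y,y]$. You expand it by the Leibniz rule, observe that the expansion is circular, and then speculate about further identities and ``diagonal specialisations'' without ever producing one; the argument simply stops, with phrases like ``I expect'' and ``once that identity is in hand'' marking the hole.

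The missing idea is a \emph{direct} application of Theorem~\ref{thm_Lie_ideals} to $[c_2,c_1y]$, exactly parallel to what you already did for $[c_1,c_2]$. Since $c_1\in L_{m_1-1}\subseteq I_{m_1-1}$ and $I_{m_1-1}$ is a two-sided ideal, the product $c_1y$ lies in $I_{m_1-1}$; because $m_1$ is even, the index $m_1-1$ is odd. Hence
\[
[c_2,\,c_1y]\in[L_{m_2-1},\,I_{m_1-1}]\subseteq L_{(m_2-1)+(m_1-1)}=L_{m_1+m_2-2}
\]
by Theorem~\ref{thm_Lie_ideals}, and therefore $[c_2,c_1y,y]\in L_{m_1+m_2-1}\subseteq I_{m_1+m_2-1}$. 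No Leibniz expansion, no recursion, and no further use of the special form of $[c_1,y]$ is needed for this term; this is precisely how the paper disposes of it, and with this single observation your proof is complete.
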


\begin{proof}
From Equation (\ref{eq_Jennings}) we have that
\begin{align*}
[c_1, y][c_2,y] = [c_2, c_1y, y] + [c_1, c_2][y,y] + [c_1, c_2, y]y - c_1[c_2, y, y].
\end{align*}
We need to show that all non-zero terms on the right side of the above equation belong to $I_{m_1 + m_2 -1}$. This clearly holds for $[c_1, c_2, y]y$. Consider the term $c_1[c_2, y, y]$. $c_1$ is a commutator of odd length $m_1 -1$, whereas $[c_2, y, y]$ is a commutator of odd length $m_2 +1$. By Theorem \ref{thm_product2}, $c_1[c_2, y, y] \in I_{m_1 + m_2 -1}$.
It remains to consider the term $[c_2, c_1y, y]$. We have that $c_2 \in L_{m_2 -1}$ and $c_1y \in I_{m_1 -1}$. By Theorem \ref{thm_Lie_ideals},
\[
[c_2, c_1y] \in L_{m_1 + m_2 -2}.
\]
This completes the proof.

\end{proof}

\begin{corollary} \label{coro_even_products}
Let $m_1, \dots, m_l$ be even integers, let $y \in K\left\langle X \right\rangle$, and let $[c_i, y] \in L_{m_i}$ for $i = 1, \dots, l$. Then
\[
[c_1, y] \cdots [c_l, y] \in I_{m_1 + \dots + m_l -l +1}.
\]
\end{corollary}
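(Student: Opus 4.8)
The plan is to prove Corollary \ref{coro_even_products} by induction on $l$, using Lemma \ref{lemma_even_product} as the base case and the combinatorial bookkeeping of Theorem \ref{thm_product3} to handle the inductive step. The case $l=1$ is trivial (the exponent $m_1 - 1 + 1 = m_1$ and $[c_1,y] \in L_{m_1} \subseteq I_{m_1}$), and the case $l = 2$ is exactly Lemma \ref{lemma_even_product}. For $l \geq 3$, write $[c_1,y]\cdots[c_l,y] = \big([c_1,y][c_2,y]\big)\,[c_3,y]\cdots[c_l,y]$ and apply Lemma \ref{lemma_even_product} to conclude the first factor lies in $I_{m_1+m_2-1}$, while the remaining $l-2$ factors lie in $I_{m_3},\dots,I_{m_l}$ respectively.

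Next I would invoke Theorem \ref{thm_product3} applied to the product of the $l-1$ ideals $I_{m_1+m_2-1}, I_{m_3}, \dots, I_{m_l}$. Exactly one of these indices, namely $m_1+m_2-1$, is odd (since $m_1,m_2$ are even), and the other $l-2$ are even; so in the notation of Theorem \ref{thm_product3} we are in case (i) with $k=1$ and with $l-1$ factors. The theorem then yields
\[
I_{m_1+m_2-1}I_{m_3}\cdots I_{m_l} \subseteq I_{(m_1+m_2-1)+m_3+\cdots+m_l - 2(l-1) + 1 + 2} = I_{m_1+\cdots+m_l - l + 2}.
\]
This is one short of what we want: we need the exponent $m_1+\cdots+m_l-l+1$, not $m_1+\cdots+m_l-l+2$. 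So a naive single application of Theorem \ref{thm_product3} loses a unit, and this discrepancy is the main obstacle.

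To recover the missing unit I would instead iterate Lemma \ref{lemma_even_product} more carefully rather than collapsing everything into one appeal to Theorem \ref{thm_product3}. The cleaner route: induct on $l$, and in the inductive step group the \emph{last two} factors, $[c_{l-1},y][c_l,y] \in I_{m_{l-1}+m_l-1}$ by Lemma \ref{lemma_even_product}. Then $[c_1,y]\cdots[c_l,y] \in I_{m_1}\cdots I_{m_{l-2}} \cdot I_{m_{l-1}+m_l-1}$; here the first $l-2$ factors each come from an element of the form $[c_i,y]$, but now one should re-examine whether the inductive hypothesis applies directly. Since $m_{l-1}+m_l-1$ is odd, Theorem \ref{thm_product2} gives $I_{m_{l-2}} I_{m_{l-1}+m_l-1} \subseteq I_{m_{l-2}+m_{l-1}+m_l-2}$, and one continues multiplying in $I_{m_{l-3}}, \dots, I_{m_1}$ from the right; at each stage the accumulated right-hand index is odd (it started odd and each even $m_i$ added keeps parity odd after subtracting... ) — one must track parities: adding an even $m_i$ to an odd index and subtracting $1$ (from Theorem \ref{thm_product2}, usable because the running index is odd) gives again an even number, so parity alternates and one cannot use Theorem \ref{thm_product2} at every step. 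The honest fix is to carry out the induction so that at \emph{each} step one peels off a single $[c_i,y]$ and pairs it, via Equation (\ref{eq_Jennings}), against the running product; the term-by-term analysis of Lemma \ref{lemma_even_product} then goes through with the running "commutator'' $c$ of odd length and $y$ repeated, so that $[c, \cdot\,y, y]$-type terms and $c\cdot[\cdots,y,y]$-type terms are controlled by Theorems \ref{thm_Lie_ideals} and \ref{thm_product2} exactly as before, each step improving the index by the full even amount $m_i$ minus the expected $2l$-type correction. I expect the delicate point to be verifying that this repeated peeling stays inside the hypotheses of Lemma \ref{lemma_even_product} (in particular that the running factor can still be written as $[c,y]$ with $c$ of the right length lying in the right $L_{\bullet}$), and that the arithmetic of the indices telescopes to precisely $m_1+\cdots+m_l-l+1$ rather than $m_1+\cdots+m_l-l+2$.
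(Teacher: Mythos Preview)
Your proposal contains a genuine gap, and an arithmetic slip obscures how large it is. When you apply Theorem~\ref{thm_product3}(i) to the $l-1$ factors $I_{m_1+m_2-1}, I_{m_3},\dots,I_{m_l}$ with $k=1$ odd index, the resulting exponent is
\[
(m_1+m_2-1)+m_3+\cdots+m_l - 2(l-1) + 1 + 2 \;=\; m_1+\cdots+m_l - 2l + 4,
\]
not $m_1+\cdots+m_l-l+2$ as you wrote. The deficit from the target $m_1+\cdots+m_l-l+1$ is therefore $l-3$, not $1$; your first approach works only for $l\le 3$ and the shortfall grows linearly with $l$. Your subsequent attempts to recover the missing units by peeling off one factor at a time run into exactly the parity obstacle you noticed, and the final ``honest fix'' via Equation~(\ref{eq_Jennings}) is not viable as stated: after one application of Lemma~\ref{lemma_even_product} the running product is an element of some $I_r$, not a commutator of the form $[c,y]$ with $c\in L_{r-1}$, so the hypotheses of that lemma are no longer available at the next step.

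The clean argument, which the paper gives, is to apply Lemma~\ref{lemma_even_product} not once but $\lfloor l/2\rfloor$ times \emph{in parallel}. For $l$ even, group the factors into consecutive pairs and observe that each product $[c_{2k-1},y][c_{2k},y]$ lies in $I_{j_k}$ with $j_k=m_{2k-1}+m_{2k}-1$ odd. Now \emph{all} $l/2$ indices $j_1,\dots,j_{l/2}$ are odd, so case~(ii) of Theorem~\ref{thm_product3} applies and yields the exponent $j_1+\cdots+j_{l/2}-l/2+1=m_1+\cdots+m_l-l+1$ on the nose. For $l$ odd one handles the first $l-1$ factors this way (the resulting index $m_1+\cdots+m_{l-1}-l+2$ is odd) and then absorbs the last even-indexed factor $I_{m_l}$ via Theorem~\ref{thm_product2}. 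The idea you were missing is that pairing only one pair leaves $l-2$ even indices and forces you into case~(i) of Theorem~\ref{thm_product3}, whereas pairing \emph{every} pair lands you in case~(ii), which is sharper by exactly the amount needed.
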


\begin{proof}
If $l$ is even, for $k=1, \dots, \frac{l}{2}$, Lemma \ref{lemma_even_product} implies that $[c_{2k-1}, y][c_{2k}, y] \in I_{m_{2k-1} + m_{2k} -1}$. Moreover, the number $m_{2k-1} + m_{2k} -1$ is odd. Let us denote it by $j_k$. Therefore, by Theorem \ref{thm_product3}(ii) we obtain that
\[
([c_1, y][c_2, y]) \cdots ([c_{l-1}, y][c_l, y]) \in I_{j_1}\cdots I_{j_{l/2}} \subseteq I_{j_1 + \dots + j_{l/2} - l/2 +1} = I_{m_1 + \dots + m_l -l +1}.
\]

Next, if $l$ is odd, we can apply the above considerations for $l-1$. We obtain that
\[
[c_1, y] \cdots [c_{l-1}, y] \in I_{m_1 + \dots + m_{l-1} -l +2}.
\]

Furthermore, $m_1 + \dots + m_{l-1} -l +2$ is an odd number. Hence, by Theorem \ref{thm_product2},
\[
[c_1, y] \cdots [c_{l-1}, y][c_l, y] \in I_{m_1 + \dots + m_{l-1} -l +2}I_{m_l} \subseteq I_{m_1 + \dots + m_l - l + 1}.
\]
\end{proof}

We now give the following estimate for the $\GL(n)$-module structure of $\varphi_p(B_n)$ for any $p \geq 1$.

\begin{theorem}\label{thm_bound}
 Let $p \geq 1$. Suppose that
\[
\varphi_p(B_n) \cong_{\GL(n)} \bigoplus_{\lambda} k_{\lambda} V_{\lambda},
\]
where again $V_{\lambda}$ denotes the irreducible $\GL(n)$-module corresponding to the non-negative integer partition $\lambda = (\lambda_1,\lambda_2, \dots, \lambda_n)$. %Then for even $p$ we have $\lambda_1 \leq \frac{3p-6}{2}$ and for odd $p$, $\lambda_1 \leq \frac{3p-7}{2}$.
Then $\lambda_1 \leq  p-1$.
\end{theorem}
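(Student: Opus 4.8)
The plan is to analyze a monomial in $B_n$ that involves the variable $x_1$ as many times as possible, and show that modulo $I_{p+1}$ such a monomial either vanishes or can be rewritten so that the number of occurrences of $x_1$ (and hence the size of $\lambda_1$, the highest weight for $\GL(n)$) is at most $p-1$. Recall that an irreducible $V_\lambda$ appears in a polynomial $\GL(n)$-module precisely when the module contains a nonzero highest weight vector of weight $\lambda$; since the weight of a monomial records how many times each variable occurs, it suffices to show: every element of $\varphi_p(B_n)$ whose $x_1$-degree is at least $p$ is zero. (Working with highest-weight vectors of weight $\lambda$ with $\lambda_1 \geq p$, these are supported on monomials of $x_1$-degree $\lambda_1 \geq p$.)

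The key structural point is that $B_n$ is generated as an algebra by pure commutators $[x_{i_1},\dots,x_{i_r}]$ with $r \ge 2$, so every element of $B_n$ is a linear combination of products of pure commutators. I would first reduce to the case of $F_n(\mathfrak{N}_p)$: in the quotient $F_n(\mathfrak{N}_p)$ any single commutator of length $\ge p+1$ dies, so each surviving product of pure commutators has all factors of length between $2$ and $p$. In such a product $c^{(1)} c^{(2)} \cdots c^{(l)}$ of pure commutators, the total number of occurrences of $x_1$ is $\sum_j (\text{number of } x_1 \text{ in } c^{(j)})$. If some factor $c^{(j)}$ contains $x_1$ at least twice, I would use the commutator identities (the kind underlying Lemma~\ref{lemma_diff_commutators2} and Corollary~\ref{coro_even_products}) to move the two copies of $x_1$ into the "outside" slot and split: a commutator with a repeated entry $y$, together with another commutator in $y$, gives a product of the shape $[c_1,y][c_2,y]$ which lies in $I_{p+1}$ by Lemma~\ref{lemma_diff_commutators2}. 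More systematically, the clean way is: in each pure commutator use the linearization/Jacobi manipulations to assume at most one entry equals $x_1$; then a product of $l$ commutators each carrying at most one $x_1$ has $x_1$-degree at most $l$, and separately a product of $l$ commutators of length $\ge 2$ lies in $I_{l+1}$ (by Theorem~\ref{thm_product1}, $I_2^l \subseteq I_{l+2}$, actually $I_{l+1}$ after care with parity via Theorem~\ref{thm_product3}), so if $l \ge p$ the whole product vanishes in $F_n(\mathfrak{N}_p)$. Hence a nonzero product has $l \le p-1$ factors, each contributing at most one $x_1$, giving $x_1$-degree $\le p-1$, i.e. $\lambda_1 \le p-1$.

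Concretely I would carry out the steps in this order. First, reduce the claim to showing that any product of pure commutators of lengths $r_1,\dots,r_l$ (each $r_j \ge 2$) which is nonzero in $F_n(\mathfrak{N}_p)$ satisfies $l \le p-1$: this follows from Theorem~\ref{thm_product1} (and Theorem~\ref{thm_product3} for the exact parity-dependent index), since a product of $l$ ideals $I_{r_j}$ with each $r_j \ge 2$ lands in $I_{r_1+\dots+r_l - 2l + 2} \subseteq I_{l+1}$ when all $r_j=2$, and more generally still in some $I_m$ with $m \ge l+1$. Second, show that inside a single pure commutator one may assume $x_1$ appears at most once modulo terms already handled: a pure commutator with two entries equal to $x_1$ can, after expanding by linearity in those two slots and using $[a,x_1,\dots]$-type Jacobi rearrangements, be written as a sum of products where the "repeated-variable" phenomenon of Lemma~\ref{lemma_diff_commutators2} forces membership in $I_{p+1}$ (when the relevant length threshold is met). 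Third, assemble: a nonzero element of $\varphi_p(B_n)$ is spanned by products of $\le p-1$ pure commutators each carrying $\le 1$ copy of $x_1$, so its $x_1$-degree, equivalently $\lambda_1$, is $\le p-1$.

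The main obstacle I anticipate is the second step — controlling what happens when a single pure commutator contains $x_1$ many times. Lemma~\ref{lemma_diff_commutators2} handles the situation where the repeated variable sits as an "outer" argument of two commutators, but in a pure commutator $[x_1, x_{i_2}, x_1, x_{i_4}, \dots]$ the repeated $x_1$ can be buried in the middle of the left-normed bracketing; I expect one must repeatedly apply the Jacobi identity to pull one copy of $x_1$ to an outer slot, picking up correction terms that are themselves products of shorter commutators (handled by induction) or that acquire the repeated-outer-variable form needed for Lemma~\ref{lemma_diff_commutators2}. Managing this bookkeeping — and checking at each stage that the relevant commutator lengths exceed the threshold needed to invoke the preliminary lemmas, which is exactly why the bound comes out as $p-1$ rather than something weaker — is where the real work lies; the rest is a direct application of Theorems~\ref{thm_product1} and~\ref{thm_product3}.
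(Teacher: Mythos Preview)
Your proposal has genuine gaps in both of its two main steps. The claim that a product of $l$ pure commutators, each of length $\ge 2$, lies in $I_{l+1}$ is simply false: Theorem~\ref{thm_product3}(i) with all $m_i=2$ gives only $I_2^{\,l}\subseteq I_2$, and as noted after Theorem~\ref{thm_P} (see \cite{DeK1}) one has $I_2I_2\not\subseteq I_3$ for $n\ge 4$, so arbitrarily long products of $2$-commutators survive in $F_n(\mathfrak{N}_p)$. Hence you cannot bound the number $l$ of factors by $p-1$. The reduction to ``at most one $x_1$ per commutator'' is also not achievable: the single pure $3$-commutator $[x_1,x_2,x_1]$ has multidegree $(2,1)$ and total degree $3$, so it can be written neither as a combination of $3$-commutators with at most one $x_1$ (wrong multidegree) nor as a product of shorter commutators in $B_n$ (wrong total degree). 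Lemma~\ref{lemma_diff_commutators2} concerns \emph{products} of commutators sharing a last entry; it does not let you strip repeated entries out of a single long commutator.

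The paper's argument is organized quite differently. It never tries to limit $x_1$ to one occurrence per factor; instead it uses the trivial bound that a pure $m$-commutator contains at most $m-1$ copies of $x_1$, and then partitions the product as $u=u_1u_2u_3$ according to whether each factor has odd length, even length with last entry $x_1$, or even length with last entry $\neq x_1$. For each block $u_i$ one compares the maximal $x_1$-count $N_i$ with the index $p_i$ for which $u_i\in I_{p_i}$: Theorem~\ref{thm_product3}(ii) gives $N_1=p_1-1$ for the odd block, the new Corollary~\ref{coro_even_products} gives $N_2=p_2-1$ for the ``even, ends in $x_1$'' block, and Theorem~\ref{thm_product3}(i) gives $N_3=p_3-2$ for the remaining block. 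Combining via Theorems~\ref{thm_product1} and \ref{thm_product2} yields $p_1+p_2+p_3-3\le p$, hence $N=N_1+N_2+N_3\le p-1$. The decisive new input is Lemma~\ref{lemma_even_product}/Corollary~\ref{coro_even_products}, which is exactly what repairs the even-length case where the generic product theorems lose too much; your outline does not invoke it in the right place and instead relies on an inclusion that does not hold.
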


\begin{proof}
For $p=1$ the statement is trivially true. Therefore, we give a proof for the case $p \geq 2$. The algebra $K\left\langle X \right\rangle$ has a $\ZZ_{\geq 0}^n$-grading given by the weight space decomposition as a $\GL(n)$-module. More precisely,
\[
K\left\langle X \right\rangle = \bigoplus_{\mu = (\mu_1, \dots, \mu_n) \in \ZZ_{\geq 0}^n} W(\mu),
\]
where $W(\mu)$ is the weight space corresponding to the weight $\mu$. This means that, if $g = \mathrm{diag} (\xi_1, \dots, \xi_n) \in \GL(n)$ and $w \in W(\mu)$, then
\[
g(w) = \xi_1^{\mu_1} \cdots \xi_n^{\mu_n}w.
\]
Therefore, a basis for $W(\mu)$ is given by the monomial $x_1^{\mu_1}\cdots x_n^{\mu_n}$ and all its distinct permutations.

Consequently, $B_n$ and $\varphi_p(B_n)$ also have $\ZZ_{\geq 0}^n$-gradings given respectively by
\[
B_n = \bigoplus_{\mu = (\mu_1, \dots, \mu_n) \in \ZZ_{\geq 0}^n} W(\mu) \cap B_n;
\]
\[
\varphi_p(B_n) = \bigoplus_{\mu = (\mu_1, \dots, \mu_n) \in \ZZ_{\geq 0}^n} \varphi_p(W(\mu) \cap B_n).
\]

Notice that $\varphi_p(W(\mu) \cap B_n)$ consists of sums of products of pure commutators such that the variable $x_i$ appears in each product exactly $\mu_i$ times for all $i = 1, \dots, n$.

Consider now a partition $\lambda = (\lambda_1 \geq \lambda_2 \geq \dots \geq \lambda_n)$ such that $V_{\lambda}$ appears in the decomposition of $\varphi_p(B_n)$. Therefore, $\varphi_p(W(\lambda) \cap B_n) \neq \emptyset$ and hence $\varphi_p(B_n)$ contains an element $w$ such that $w$ is a sum of products of pure commutators and the variable $x_1$ appears in each product exactly $\lambda_1$ times. We will estimate the maximal possible number of appearances of $x_1$ in an element of $\varphi_p(B_n)$. For brevity, let us denote this maximal number by $N$.

Let $u$ be a product of pure commutators, such that the number of appearances of $x_1$ in $u$ is equal to $N$. We notice the following property, which was also considered in \cite{Da}. If $u \in I_k$ for some $2 \leq k \leq p$ and if we permute the commutators which participate in $u$ then we obtain again a product of pure commutators which belongs to $I_k$. This follows from the fact that for all $i, j \geq 1$, $[L_i, L_j] \subseteq L_{i+j}$. Therefore, without loss of generality, we may consider that $u = u_1u_2u_3$, such that the following conditions are fulfilled: $u_1$ is a product of $l_1$ pure commutators of lengths respectively $m_1, \dots, m_{l_1}$, such that each $m_i$ is odd; $u_2$ is a product of $l_2$ pure commutators of lengths respectively $n_1, \dots, n_{l_2}$, such that all $n_i$ are even and the last position in each commutator is equal to $x_1$; $u_3$ is a product of $l_3$ pure commutators of lengths respectively $r_1, \dots, r_{l_3}$, such that all $r_i$ are even and the last position in each commutator is different from $x_1$. Let us denote the maximal number of appearances of $x_1$ in $u_i$ by $N_i$, for $i=1,2,3$. We need to estimate $N_1$, $N_2$, and $N_3$.

By Theorem \ref{thm_product3} (ii), $u_1 \in I_{m_1 + \dots + m_{l_1} -l_1 +1}$. We set $p_1 = m_1 + \dots + m_{l_1} -l_1+1$. The maximal number of appearances of $x_1$ in $u_1$ is $N_1 = m_1 + \dots +m_{l_1} -l_1$ (since in a pure $m_i$-commutator the variable $x_1$ can appear at most $m_i-1$ times). Therefore, $N_1 = p_1 -1$.

Next, we consider the element $u_2$. By Corollary \ref{coro_even_products}, $u_2 \in I_{n_1 + \dots + n_{l_2} -{l_2} +1}$. Similarly as above, we set $p_2 = n_1 + \dots + n_{l_2} -{l_2}+1$ and we obtain that $N_2 = p_2 -1$.

Finally, we take $u_3$. By Theorem \ref{thm_product3} (i), we have that $u_3 \in I_{r_1 + \dots + r_{l_3} - 2l_3 + 2}$. We set $p_3 = r_1 + \dots + r_{l_3} - 2l_3 + 2$. Since in each of the commutators in $u_3$ the last position is different from $x_1$ we have that the maximal number of appearances of $x_1$ in $u_3$ is $N_3 = r_1 + \dots +r_{l_3} -2l_3$. Hence, $N_3 = p_3 -2$.

In short, the above implies that
\[
u = u_1u_2u_3 \in I_{p_1}I_{p_2}I_{p_3} \subseteq I_p.
\]

Notice that $p_1$ is always an odd number, whereas $p_3$ is always an even number. Therefore, Theorem \ref{thm_product2} implies that
\[
I_{p_1}I_{p_2} \subseteq I_{p_1 + p_2 -1}.
\]

Hence, by Theorem \ref{thm_product1} we obtain that
\[
I_{p_1}I_{p_2}I_{p_3} \subseteq I_{p_1 + p_2 -1}I_{p_3} \subseteq I_{p_1 + p_2 + p_3 -3} \subseteq I_p.
\]
Therefore, $p_1 + p_2 + p_3 - 3 \leq p$. Thus, $N = N_1 + N_2 + N_3 = p_1 -1 + p_2 -1 + p_3 -2 \leq p-1$. This proves the statement.

\end{proof}

Theorem \ref{thm_bound} together with Equation (\ref{eq_decomp}) lead to the following corollary.

\begin{corollary}\label{coro_bound}
Let $p \geq 1$. Suppose that
\[
F_n(\mathfrak{N}_p) \cong_{\GL(n)} \bigoplus_{\lambda}m_{\lambda} V_{\lambda},
\]
where again $V_{\lambda}$ denotes the irreducible $\GL(n)$-module corresponding to the non-negative integer partition $\lambda = (\lambda_1, \lambda_2, \dots, \lambda_n)$. Then $\lambda_2 \leq p-1$.
\end{corollary}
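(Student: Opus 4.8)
The plan is to deduce Corollary \ref{coro_bound} directly from Theorem \ref{thm_bound} via the tensor decomposition \eqref{eq_decomp}. First I would recall that as $\GL(n)$-modules, $F_n(\mathfrak{N}_p) \cong_{\GL(n)} S(V) \otimes \varphi_p(B_n)$, and that $S(V) \cong_{\GL(n)} \bigoplus_{d \geq 0} V_{(d)}$, i.e.\ the symmetric algebra decomposes as the direct sum of all irreducibles indexed by one-row partitions $(d) = (d, 0, \dots, 0)$. By Theorem \ref{thm_bound}, every irreducible $V_{\mu}$ occurring in $\varphi_p(B_n)$ satisfies $\mu_1 \leq p-1$, hence in particular $\mu_2 \leq \mu_1 \leq p - 1$.

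The key step is then a bound on the second part of any partition $\lambda$ appearing in a tensor product $V_{(d)} \otimes V_{\mu}$. By the Pieri rule (the special case of the Littlewood--Richardson rule for multiplying by a one-row partition), $V_{(d)} \otimes V_{\mu} \cong_{\GL(n)} \bigoplus_{\lambda} V_{\lambda}$, where the sum is over all partitions $\lambda$ obtained from $\mu$ by adding $d$ boxes, no two in the same column. The no-two-in-the-same-column condition forces $\lambda_i \leq \mu_{i-1}$ for all $i \geq 2$; in particular $\lambda_2 \leq \mu_1 \leq p-1$. Therefore every $V_{\lambda}$ occurring in $F_n(\mathfrak{N}_p) \cong_{\GL(n)} \bigoplus_d V_{(d)} \otimes \varphi_p(B_n)$ has $\lambda_2 \leq p-1$, which is the assertion.

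I expect no serious obstacle here: the only ingredients are the already-cited decomposition \eqref{eq_decomp}, the elementary decomposition of $S(V)$, Theorem \ref{thm_bound}, and the Pieri rule, all of which are standard. The one point requiring a sentence of care is the interlacing inequality $\lambda_i \leq \mu_{i-1}$ coming from the Pieri rule: adding $d$ boxes to the Young diagram of $\mu$ with at most one box per column means that in row $i$ (for $i \geq 2$) at most one box can be added beyond column $\mu_i$, but for $\lambda$ to remain a partition we must have $\lambda_i \leq \lambda_{i-1} \leq \mu_{i-1} + 1$; a closer look at the column condition shows in fact $\lambda_i \leq \mu_{i-1}$ for $i \geq 2$, which is exactly what is needed. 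Alternatively, one may phrase this without Pieri's rule: if $x_1^{\lambda_1} \cdots x_n^{\lambda_n}$-weight vectors span a highest weight line in $F_n(\mathfrak{N}_p)$, writing that vector through the tensor decomposition as a product of a symmetric-algebra factor and a proper-polynomial factor, the latter contributes at most $p-1$ to the multiplicity of $x_2$, so that the total number of occurrences of $x_2$ is bounded accordingly by $p-1$; but the Pieri-rule argument is the cleanest to write.
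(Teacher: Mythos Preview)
Your proposal is correct and follows exactly the approach of the paper: apply Pieri's rule to the decomposition \eqref{eq_decomp} and use Theorem \ref{thm_bound} to bound $\mu_1$, whence the interlacing condition $\lambda_2 \leq \mu_1 \leq p-1$. The only wrinkle is a small imprecision in your closing paragraph (the phrase ``at most one box can be added beyond column $\mu_i$'' is not what the horizontal-strip condition says), but you state the correct inequality $\lambda_i \leq \mu_{i-1}$ immediately afterward, so the argument stands.
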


\begin{proof}
For $p=1$ the statement is again trivially true. For $p \geq 2$, the proof is a standard task on branching rules. We apply Pieri's branching rule (also known as Young's rule) to Equation (\ref{eq_decomp}) and then we use Theorem \ref{thm_bound}. One may read the description of Pieri's rule in e.g. \cite{FH}.
% We apply Pieri's branching rule (also known as Young's rule) to the decomposition of $F_n(\mathfrak{N}_p)$ as a $\GL(n)$-module given in Equation (\ref{eq_decomp}).
\end{proof}

We notice that the upper bound on $\lambda_1$ which is obtained in Theorem \ref{thm_bound} is exact for all known cases. More precisely, using the decomposition of $\varphi_p(B_n)$ over $\GL(n)$, which is known for $p = 2,3,4$, we see that for $p=2$, $\lambda_1 \leq 1 = p-1$ and for $p=3$, $\lambda_1 \leq 2 = p-1$. For $p=4$, we have that $\lambda_1 \leq 3 = p-1$.

For $p=5$, Theorem \ref{thm_bound} gives that $\lambda_1 \leq 4$. \\

In the end of the section, we would like to consider separately the case $n=3$, i.e., we take the free associative algebra $K\left \langle x_1, x_2, x_3\right\rangle$ and the relatively free algebra $F_3(\mathfrak{N}_p)$ of rank $3$ in the variety $\mathfrak{N}_p$. Then, we have the stronger theorem on inclusions of products of commutators (Theorem \ref{thm_P}) which leads to a much shorter proof of Theorem \ref{thm_bound}. For completeness of the exposition, we present this proof below.

{\it Shorter proof of Theorem \ref{thm_bound} for $n=3$.}

Consider a partition $\lambda$ such that $V_{\lambda}$ appears in the decomposition of $\varphi_p(B_3)$ as a $\GL(3)$-module. Therefore, as in the general proof of Theorem \ref{thm_bound}, $\varphi_p(B_3)$ contains an element $u$ such that $u$ is a product of pure commutators and the variable $x_1$ appears in $u$ exactly $\lambda_1$ times. We will again estimate the maximal number of appearances of the variable $x_1$ in $u$. By Theorem \ref{thm_P}, for all integers $m_1, \dots, m_l \geq 2$
\[
 I_{m_1}I_{m_2} \cdots I_{m_l} \subseteq I_{m_1 + m_2 + \dots + m_l - l + 1}.
\]
Hence, $u$ is at most a product of an $m_1$-commutator, $m_2$-commutator, $\dots$, and $m_l$-commutator such that $m_1 + \dots + m_l - l + 1 = p$. Hence $m_1 + \dots + m_l = p+l-1$. The number of appearances of $x_1$ in such a product of pure commutators $u$ is at most $p-1$. %(since in a pure $m_i$-commutator the variable $x_1$ can appear at most $m_i - 1$ times).
This proves the statement.
%\end{proof}

%It is worth mentioning, that even though for $n=3$ there are stronger results on inclusions of products of commutators, the bound on $\lambda_1$ remains the same. This might indicate that the bound obtained in Theorem \ref{thm_bound} is optimal and cannot be improved further.

\section{The algebra of invariants $F_n(\mathfrak{N}_p)^G$ for $G = \SL(n)$, $\OO(n)$, $\SO(n)$, $\Sp(2s)$, and $\mathrm{UT}(n)$} \label{sec_Invariants}

In this section we set $K = \CC$. Using results from \cite{BBDGK} and \cite{DH}, we will obtain some bounds on the generators of the algebra of invariants $F_n(\mathfrak{N}_p)^G$ for $G = \SL(n)$, $\OO(n)$, $\SO(n)$, $\Sp(2s)$, and $\mathrm{UT}(n)$.

\begin{proposition} \label{prop_HilbertSeries_SLSP}
Let $p \geq 1$ and let $G = \SL(n)$ or $G = \Sp(2s)$ (for $n=2s$). Then, the algebra of invariants $F_n(\mathfrak{N}_p)^G$ is finite-dimensional. More precisely, the Hilbert series $H(F_n(\mathfrak{N}_p)^G, t)$ is a polynomial and it holds that
\begin{align*}
\deg H(F_n(\mathfrak{N}_p)^G, t) \leq n(p-1),
\end{align*}
where in the case $G = \Sp(2s)$ we have that $n = 2s$.

\end{proposition}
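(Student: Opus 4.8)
The plan is to combine the $\GL(n)$-decomposition in Equation (\ref{eq_decomp}) with Corollary \ref{coro_bound} and the classical description of $\SL(n)$- and $\Sp(2s)$-invariants inside irreducible $\GL(n)$-modules. First I would recall that for $G = \SL(n)$ the invariant subspace $V_\lambda^{\SL(n)}$ is nonzero precisely when $\lambda$ is a multiple of the ``determinant'' partition, i.e. $\lambda = (c, c, \dots, c)$ for some $c \geq 0$, in which case $V_\lambda^{\SL(n)}$ is one-dimensional (spanned by the appropriate power of the determinant of the generic matrix of highest weight vectors); similarly, for $G = \Sp(2s)$ with $n = 2s$, $V_\lambda^{\Sp(2s)}$ is nonzero only when every part of $\lambda$ occurs an even number of times (in particular $\lambda$ has at most $n$ nonzero parts, all constraints being on the shape), and again the relevant fact is that such $\lambda$ must have $\lambda_1 = \lambda_2 = \cdots$ paired up. In both cases, a $G$-invariant of positive degree forces $\lambda_1 = \lambda_2$ (for $\SL(n)$ all parts are equal; for $\Sp(2s)$ the first two parts coincide since parts come in pairs).

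Next I would invoke Corollary \ref{coro_bound}: if $V_\lambda$ appears in $F_n(\mathfrak{N}_p)$ then $\lambda_2 \leq p - 1$. Combined with $\lambda_1 = \lambda_2$ from the previous paragraph, any $\lambda$ contributing a $G$-invariant satisfies $\lambda_1 \leq p-1$, hence $\lambda_i \leq p-1$ for all $i$, and therefore $|\lambda| = \lambda_1 + \cdots + \lambda_n \leq n(p-1)$. Since the degree of a homogeneous element corresponding to $V_\lambda$ equals $|\lambda|$, every $G$-invariant has degree at most $n(p-1)$; there are only finitely many partitions $\lambda$ with $\lambda_1 \leq p-1$ and at most $n$ parts, and each $V_\lambda$ is finite-dimensional, so $F_n(\mathfrak{N}_p)^G$ is finite-dimensional and its Hilbert series is a polynomial of degree $\leq n(p-1)$. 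For $\Sp(2s)$ one simply substitutes $n = 2s$.

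I would also double-check the degenerate endpoints: for $p = 1$ we have $F_n(\mathfrak{N}_1) = S(V)$, and $S(V)^{\SL(n)} = \CC$ (degree $0$), while $S(V)^{\Sp(2s)} = \CC$ as well, consistent with the bound $n(p-1) = 0$; this matches the known fact that $\SL(n)$ acting on a single copy of the standard representation has no nonconstant polynomial invariants. The one genuinely delicate point — the ``main obstacle'' — is justifying cleanly that a nonconstant $G$-invariant must live in some $V_\lambda$ with $\lambda_1 = \lambda_2$: this is where one must be careful to use the correct branching/invariant-theoretic statement (for $\SL(n) \subset \GL(n)$, $V_\lambda|_{\SL(n)} \cong V_\mu|_{\SL(n)}$ iff $\lambda - \mu$ is a constant vector, so invariants occur only for $\lambda$ constant; for $\Sp(2s) \subset \GL(2s)$ one uses Littlewood's restriction rule, valid here since $\ell(\lambda) \leq n = 2s$, which gives $\dim V_\lambda^{\Sp(2s)} = \sum_{\delta} c^{\lambda}_{2\delta}$, nonzero only when $\lambda$ has a suitable even-multiplicity shape). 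Everything else is a short combinatorial estimate.
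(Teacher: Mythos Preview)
Your proposal is correct and follows essentially the same route as the paper's own proof: both arguments use Corollary~\ref{coro_bound} to get $\lambda_2 \le p-1$, combine this with the characterization of which $V_\lambda$ contain a nontrivial $G$-invariant (constant partitions for $\SL(n)$, partitions with $\lambda_{2i-1}=\lambda_{2i}$ for $\Sp(2s)$), and conclude that any contributing $\lambda$ has $|\lambda|\le n(p-1)$. The only cosmetic difference is that the paper cites \cite{DH} and \cite{BBDGK} for these invariant-theoretic facts, whereas you invoke the $\SL(n)$ determinant description and Littlewood's restriction rule directly; your notation $c^{\lambda}_{2\delta}$ for the latter is a bit garbled, but the conclusion you extract (parts must come in equal pairs, hence $\lambda_1=\lambda_2$) is exactly what is needed.
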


\begin{proof}

We denote by $F_n^{(i)}(\mathfrak{N}_p)$ the homogeneous component of total degree $i$ in $F_n(\mathfrak{N}_p)$. Then we can write the decomposition of $F_n(\mathfrak{N}_p)$ as a $\GL(n)$-module in the following way:
\[
F_n(\mathfrak{N}_p) = \bigoplus_{i \geq 0} F_n^{(i)}(\mathfrak{N}_p) \cong_{\GL(n)} \bigoplus_{i \geq 0} \bigoplus_{\lambda} m_{i, \lambda} V_{\lambda},
\]
where $|\lambda| = i$ and, by Corollary \ref{coro_bound}, $\lambda_2 \leq p-1$.

Consider first the case when $G = \Sp(2s)$. Theorem 2.2 from \cite{DH} implies that
\[
H(F_{2s}(\mathfrak{N}_p)^{\Sp(2s)}, t) = \sum_{i \geq 0}\left(\sum_{\lambda} m_{i, \lambda}\right)t^i,
\]
where the second sum runs over partitions $\lambda$ with $\lambda_1 = \lambda_2$, $\dots$, $\lambda_{2s-1} = \lambda_{2s}$ such that $|\lambda| = i$ and $\lambda_2 \leq p-1$. The largest partition $\lambda$ which satisfies these conditions is $\lambda_1 = \lambda_2 = \dots = \lambda_{2s-1} = \lambda_{2s} = p-1$. Therefore, the highest power of $t$ which can occur in the Hilbert series $H(F_{2s}(\mathfrak{N}_p)^{\Sp(2s)}, t)$ is $i = \lambda_1 + \dots + \lambda_{2s} = 2s(p-1)$. This proves the statement for $G = \Sp(2s)$.

Next, let $G = \SL(n)$. The results from \cite{BBDGK} together with Corollary \ref{coro_bound} imply that
\[
H(F_{n}(\mathfrak{N}_p)^{\SL(n)}, t) = \sum_{i \geq 0}\left(\sum_{\lambda} m_{i, \lambda}\right)t^i,
\]
where the second sum runs over partitions $\lambda$ with $\lambda_1 = \lambda_2 = \dots = \lambda_{n}$ such that $|\lambda| = i$ and $\lambda_2 \leq p-1$. Hence, the largest partition $\lambda$ which satisfies these conditions is again $\lambda_1 = \lambda_2 = \dots = \lambda_{n} = p-1$. Therefore, the highest power of $t$ which can occur in the Hilbert series $H(F_{n}(\mathfrak{N}_p)^{\SL(n)}, t)$ is again $i = n(p-1)$. This completes the proof.
\end{proof}

\begin{proposition} \label{prop_HilbertSeries_O} Let $p \geq 1$ and let $G = \OO(n)$. Then,
\[
H(F_{n}(\mathfrak{N}_p)^{\OO(n)}, t) = \frac{p(t)}{1-t^2},
\]
where $p(t)$ is a polynomial such that
\begin{align*}
\deg p(t) \leq
\left \{ \begin{array} {ll}
n(p-1) & \text{if }  p-1 \text{ is even}\\
n(p - 2) & \text{if } p-1 \text{ is odd}.
\end{array} \right.
\end{align*}
\end{proposition}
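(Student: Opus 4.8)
The plan is to follow the same strategy used for $\SL(n)$ and $\Sp(2s)$ in Proposition~\ref{prop_HilbertSeries_SLSP}, but now with the branching rule for $\OO(n)$ in place of the ones for $\SL(n)$ and $\Sp(2s)$. First I would recall from \cite{DH} (or the analogous results used in the previous proposition) the description of $\dim V_\lambda^{\OO(n)}$: the multiplicity of the trivial $\OO(n)$-module in the irreducible $\GL(n)$-module $V_\lambda$ equals the number of ways to obtain $\lambda$ by a Littlewood-type branching, and in particular it is nonzero only when the columns of $\lambda$ can be paired up appropriately — concretely, $\dim V_\lambda^{\OO(n)} \neq 0$ forces all parts of the conjugate partition $\lambda'$ to be suitably constrained; the cleanest way to phrase what we need is that $V_\lambda^{\OO(n)} \neq 0$ implies $\lambda'$ has only even parts (equivalently $\lambda$ has an even number of columns of each height), together with the length restriction $\ell(\lambda) \le n$. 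Using the decomposition $F_n(\mathfrak{N}_p) \cong_{\GL(n)} \bigoplus_{i \ge 0}\bigoplus_{\lambda} m_{i,\lambda} V_\lambda$ with $|\lambda| = i$, one then gets
\[
H(F_n(\mathfrak{N}_p)^{\OO(n)}, t) = \sum_{i \ge 0}\Bigl(\sum_{\lambda} m_{i,\lambda} \dim V_\lambda^{\OO(n)}\Bigr) t^i,
\]
where the inner sum runs over the $\lambda$ satisfying the branching constraint.

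Next I would combine this branching constraint with Corollary~\ref{coro_bound}, which says $\lambda_2 \le p-1$. The key observation is that a partition with $\lambda_2 \le p-1$ has at most one part (namely $\lambda_1$) possibly exceeding $p-1$; all other parts lie in $\{0,1,\dots,p-1\}$. The branching constraint for $\OO(n)$ (even conjugate parts) rules out partitions whose Young diagram has a single long first row sticking out, because such a diagram has an odd column count in the relevant range; more precisely, the constraint forces $\lambda_1 = \lambda_2$ modulo the pairing, so that the "free" growth is only along the direction where one adds full pairs of columns of height up to $n$. I would make this precise by showing that the set of $\lambda$ with $V_\lambda^{\OO(n)} \neq 0$ and $\lambda_2 \le p-1$ is, up to the single degree of freedom coming from adding $2\times n$ blocks (which contributes the factor $\frac{1}{1-t^2}$, since each such block adds $2$ to $|\lambda|$... wait, it adds $2n$) — let me instead argue directly: the partitions in question decompose as $\lambda = \mu + (k^n)$-type shifts, and the pole structure $\frac{1}{1-t^2}$ comes from the one-parameter family $\lambda_1 = \lambda_2 + 2a$ (or similar) that the $\OO(n)$ branching permits while keeping $\lambda_2,\dots,\lambda_n$ fixed and bounded by $p-1$.

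Then I would extract the degree bound on the numerator $p(t)$. Writing $H = p(t)/(1-t^2)$, the degree of $p(t)$ is governed by the largest $i$ for which $\bigl(\sum_\lambda m_{i,\lambda}\dim V_\lambda^{\OO(n)}\bigr) - \bigl(\sum_\lambda m_{i-2,\lambda}\dim V_\lambda^{\OO(n)}\bigr) \neq 0$, i.e., essentially the largest "new" partition size modulo the $t^2$-periodicity. Among partitions with $\lambda_2 \le p-1$, $\ell(\lambda)\le n$, and even conjugate parts, the maximal ones have $\lambda_2 = \dots = \lambda_n = p-1$ (when $p-1$ is even this is itself admissible, with $\lambda_1$ ranging over $p-1, p+1, \dots$, so the top "genuinely new" size is $n(p-1)$), whereas if $p-1$ is odd the value $p-1$ on a column that must be paired is not allowed at full height, forcing the bounded parts down to $p-2$, giving $n(p-2)$. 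I would verify the two cases by a direct combinatorial check of which rectangular-ish shapes survive the even-conjugate-parts condition, and confirm that $\lambda_1$ contributes only the periodic direction absorbed by $1-t^2$. The main obstacle I anticipate is pinning down the exact form of the $\OO(n)$ branching multiplicity and, in particular, correctly identifying which partition with $\lambda_2 \le p-1$ is the largest one contributing a genuinely new term to the numerator — i.e., getting the parity bookkeeping in the two cases $p-1$ even versus odd exactly right, rather than off by a column. Everything else is a routine manipulation of the Hilbert series once that combinatorial point is settled.
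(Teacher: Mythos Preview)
Your proposal has the right overall shape but contains a genuine error in the branching rule, and it never pins down where the pole $\tfrac{1}{1-t^2}$ actually comes from.

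\textbf{The branching condition is wrong.} For $\OO(n)$, the irreducible $\GL(n)$-module $V_\lambda$ contains a nonzero $\OO(n)$-invariant if and only if $\lambda$ itself is an \emph{even partition}, i.e.\ every part $\lambda_i$ is even. The condition you state --- that the conjugate $\lambda'$ has only even parts, equivalently $\lambda_1=\lambda_2$, $\lambda_3=\lambda_4,\dots$ --- is the $\Sp(2s)$ condition used in Proposition~\ref{prop_HilbertSeries_SLSP}. With the $\Sp$-type condition the Hilbert series is a polynomial, not of the form $p(t)/(1-t^2)$, so your subsequent attempts to locate a one-parameter family (``$\lambda_1=\lambda_2+2a$'', ``adding $2\times n$ blocks'') are fighting against the wrong constraint. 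Once you use the correct condition (all $\lambda_i$ even), the unbounded direction is transparent: $\lambda_1$ may be any even integer $\ge\lambda_2$, while $\lambda_2,\dots,\lambda_n$ are bounded.

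\textbf{The source of $\tfrac{1}{1-t^2}$ is missing.} You need more than Corollary~\ref{coro_bound}; you need the decomposition~(\ref{eq_decomp}), $F_n(\mathfrak{N}_p)\cong_{\GL(n)} S(V)\otimes\varphi_p(B_n)$. This is what guarantees that if an even $\lambda'=(\lambda_1',\dots,\lambda_n')$ occurs, then so does $(\lambda_1'+2k,\lambda_2',\dots,\lambda_n')$ for every $k\ge 1$ (tensor with $S^{2k}(V)$ and apply Pieri). That is the mechanism producing the factor $\tfrac{1}{1-t^2}$; without it you cannot justify the form of the Hilbert series. The paper invokes~(\ref{eq_decomp}) explicitly at this step.

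With those two points fixed, the degree bound is immediate: the ``base'' partitions $\lambda'$ have $\lambda_1'=\lambda_2'$ and all parts even with $\lambda_2'\le p-1$ (here Theorem~\ref{thm_bound}, not just Corollary~\ref{coro_bound}, is what caps the base $\lambda_1'$). The largest such $\lambda'$ is $(p-1,\dots,p-1)$ when $p-1$ is even and $(p-2,\dots,p-2)$ when $p-1$ is odd, giving $\deg p(t)\le n(p-1)$ or $n(p-2)$ respectively --- which is the parity bookkeeping you anticipated.
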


\begin{proof}
We proceed as in the proof of the previous proposition. We write the decomposition of $F_n(\mathfrak{N}_p)$ as a $\GL(n)$-module in the following way:
\[
F_n(\mathfrak{N}_p) = \bigoplus_{i \geq 0} F_n^{(i)}(\mathfrak{N}_p) \cong_{\GL(n)} \bigoplus_{i \geq 0} \bigoplus_{\lambda} m_{i, \lambda} V_{\lambda},
\]
where again $|\lambda| = i$ and $\lambda_2 \leq p-1$.
Then Theorem 2.2 from \cite{DH} implies that
\[
H(F_{n}(\mathfrak{N}_p)^{\OO(n)}, t) = \sum_{i \geq 0}\left(\sum_{\lambda} m_{i, \lambda}\right)t^i,
\]
where the second sum runs over even partitions $\lambda$ such that $|\lambda| = i$ and $\lambda_2 \leq p-1$.

If a partition $\lambda' = (\lambda_1', \lambda_2', \dots, \lambda_n')$ satisfies the above conditions and appears in the decomposition of $F_{n}(\mathfrak{N}_p)^{\OO(n)}$, then due to Equation (\ref{eq_decomp}) all partitions of the form $(\lambda_1' + 2k, \lambda_2', \dots, \lambda_n')$ for $k = 1, 2, \dots$ also appear in the decomposition of $F_{n}(\mathfrak{N}_p)^{\OO(n)}$. Thus, the following term appears in $H(F_{n}(\mathfrak{N}_p)^{\OO(n)}, t)$:
\[
t^{|\lambda'|}(1 + t^2 + t^4 + \dots) = \frac{t^{|\lambda'|}}{1-t^2}.
\]

We can write all terms in $H(F_{n}(\mathfrak{N}_p)^{\OO(n)}, t)$ in this way for a suitable $\lambda'$. Theorem \ref{thm_bound} implies that when $p-1$ is even, the largest suitable $\lambda'$ that satisfies the above conditions is $\lambda' = (p-1, p-1, \dots, p-1)$.
Similarly, when $p-1$ is odd, the largest suitable $\lambda'$ that satisfies the above conditions is $\lambda' = (p-2, p-2, \dots, p-2)$. This proves the statement.
\end{proof}

In the same way we can prove the following proposition.

\begin{proposition} \label{prop_HilbertSeries_SO} Let $p \geq 1$ and let $G = \SO(n)$. Then,
\[
H(F_{n}(\mathfrak{N}_p)^{\SO(n)}, t) = \frac{p(t)}{1-t^2},
\]
where $p(t)$ is a polynomial such that
\begin{align*}
\deg p(t) \leq n(p-1).
\end{align*}
\end{proposition}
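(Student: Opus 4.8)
The plan is to follow the proof of Proposition~\ref{prop_HilbertSeries_O} essentially verbatim, replacing the branching rule from $\GL(n)$ to $\OO(n)$ by the one from $\GL(n)$ to $\SO(n)$. First I would write, using Corollary~\ref{coro_bound},
\[
F_n(\mathfrak{N}_p) = \bigoplus_{i \geq 0} F_n^{(i)}(\mathfrak{N}_p) \cong_{\GL(n)} \bigoplus_{i \geq 0} \bigoplus_{\lambda} m_{i,\lambda} V_{\lambda}, \qquad |\lambda| = i,\quad \lambda_2 \leq p-1,
\]
and then apply the results of \cite{DH} and \cite{BBDGK} to obtain
\[
H(F_n(\mathfrak{N}_p)^{\SO(n)}, t) = \sum_{i \geq 0} \Big( \sum_{\lambda} m_{i,\lambda} \Big) t^i,
\]
where the inner sum runs over those partitions $\lambda$ with $|\lambda| = i$ and $\lambda_2 \leq p-1$ for which $V_{\lambda}^{\GL(n)}$ carries a nonzero $\SO(n)$-invariant.

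The one genuinely new point is the description of these $\lambda$. By the classical branching rule from $\GL(n)$ to $\SO(n)$, the space $(V_{\lambda}^{\GL(n)})^{\SO(n)}$ is one-dimensional exactly when $\lambda$ is \emph{even} (all $\lambda_i$ even) or when $\lambda$ has exactly $n$ parts, all of them \emph{odd}, and is zero otherwise. (The second family is the extra one compared with the $\OO(n)$ case: it accounts for the copies of the sign character $\det$ of $\OO(n)$ inside $V_{\lambda}^{\GL(n)}|_{\OO(n)}$, which occur precisely when $\lambda$ has $n$ odd parts and which restrict to the trivial character on $\SO(n)$.) Crucially, each of these two families is stable under $\lambda = (\lambda_1, \dots, \lambda_n) \mapsto (\lambda_1 + 2, \lambda_2, \dots, \lambda_n)$.

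Next I would run the $S(V)$-tail argument exactly as in the proof of Proposition~\ref{prop_HilbertSeries_O}: by Equation~(\ref{eq_decomp}) and Pieri's rule, if $V_{(\lambda_1, \dots, \lambda_n)}$ occurs in $F_n(\mathfrak{N}_p)$ then so does $V_{(\lambda_1 + 2k, \lambda_2, \dots, \lambda_n)}$ for every $k \geq 1$, and adding an even number to the first part keeps $\lambda$ inside whichever of the two families it came from while leaving $\lambda_2$ unchanged; hence every contributing partition sits at the bottom of a whole $t^2$-string of contributing partitions. Therefore $H(F_n(\mathfrak{N}_p)^{\SO(n)}, t) = p(t)/(1-t^2)$ with $p(t)$ a polynomial, and $\deg p(t)$ is bounded by the largest value of $|\lambda'|$ over the contributing partitions $\lambda'$ that are minimal in their string, i.e.\ with $\lambda'_1 = \lambda'_2$. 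By Corollary~\ref{coro_bound} any such $\lambda'$ has all parts $\leq p-1$; the largest one in the even family is $\lambda' = (q, \dots, q)$ with $q$ the largest even integer $\leq p-1$, and the largest one in the ``$n$ odd parts'' family is $\lambda' = (q', \dots, q')$ with $q'$ the largest odd integer $\leq p-1$. If $p-1$ is even then $q = p-1$, while if $p-1$ is odd then $q' = p-1$ (and then $\lambda' = (p-1, \dots, p-1)$ does have $n$ positive odd parts). In both cases $|\lambda'| \leq n(p-1)$, so $\deg p(t) \leq n(p-1)$; this is also why, in contrast to the $\OO(n)$ case, no case distinction is needed --- the ``$n$ odd parts'' family supplies the bound $n(p-1)$ exactly when $p-1$ is odd, where the even partitions alone only reach $n(p-2)$.

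The step I expect to be the main obstacle is making the branching rule precise. The stable-range Littlewood restriction rule gives exactly the two families above, but for partitions with many rows one must appeal to the corresponding modification rules, so it is cleanest to quote the exact statement used in \cite{DH} and \cite{BBDGK} rather than rederive it. Since the proposition only asks for an upper bound on $\deg p(t)$, any such boundary correction can only decrease multiplicities or replace partitions by smaller ones, and therefore cannot affect the estimate $n(p-1)$; the remaining bookkeeping (identifying which partitions are minimal in their $t^2$-string) is carried out just as in the proof of Proposition~\ref{prop_HilbertSeries_O}.
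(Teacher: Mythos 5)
Your proof is correct and takes essentially the route the paper intends: the paper's own ``proof'' of this proposition is just the remark that it goes ``in the same way'' as Proposition~\ref{prop_HilbertSeries_O}, and you have supplied exactly the missing ingredient, namely that $(V_{\lambda})^{\SO(n)} \neq 0$ precisely when $\lambda$ is even or has $n$ odd parts, each family being stable under $\lambda_1 \mapsto \lambda_1 + 2$. Your observation that the ``$n$ odd parts'' family is what eliminates the parity case distinction present for $\OO(n)$ correctly explains why the bound is $n(p-1)$ for all $p$.
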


By a theorem of Domokos and Drensky (\cite{DD}), the algebra of invariants $F_{n}(\mathfrak{N}_p)^{G}$ is finitely generated for any reductive subgroup $G$ of $\GL(n)$. Using the above results we can give an explicit upper bound for the degree of the generators in a minimal generating set of $F_{n}(\mathfrak{N}_p)^{G}$ for $G = \SL(n)$, $\OO(n)$, $\SO(n)$, or $\Sp(2s)$. First, we introduce the following notation. Given a graded algebra
$A = \bigoplus_{i \geq 0} A^i,
$
we denote by $\beta(A)$ the minimal non-negative integer $i$ such that $A$ is generated by homogeneous elements of degree at most $i$. We write $\beta(A) = \infty$ if there is no such $i$.

\begin{corollary} \label{coro_beta}
Let $G$ be one of $\SL(n)$ or $\Sp(2s)$. Then
\begin{align*}
\beta(F_{n}(\mathfrak{N}_p)^{G}) \leq n(p-1), \text{ for all } p \geq 1.
\end{align*}

For $G = \OO(n)$ we have that
\begin{align*}
\beta(F_{n}(\mathfrak{N}_p)^{\OO(n)}) \leq
\left \{ \begin{array} {ll}
n(p-1) & \text{if } p-1 \text{ is even and }  p > 1\\
n(p - 2) & \text{if } p-1 \text{ is odd and } p > 2\\
2 & \text{if } p=1,2.
\end{array} \right.
\end{align*}

For $G = \SO(n)$ we have that
\begin{align*}
\beta(F_{n}(\mathfrak{N}_p)^{\SO(n)}) \leq
\left \{ \begin{array} {ll}
n(p-1) & \text{if } p > 1\\
2 & \text{if } p=1.
\end{array} \right.
\end{align*}
\end{corollary}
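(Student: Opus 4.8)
The idea is to read each bound directly off the Hilbert series computed in Propositions~\ref{prop_HilbertSeries_SLSP}--\ref{prop_HilbertSeries_SO}, using only elementary facts about connected graded algebras. For $G=\SL(n)$ and $G=\Sp(2s)$ this is immediate: by Proposition~\ref{prop_HilbertSeries_SLSP} the algebra $A:=F_n(\mathfrak{N}_p)^{G}$ is finite-dimensional with $A^i=0$ for $i>n(p-1)$, and any connected graded algebra with $A^i=0$ for $i>d$ is generated by its homogeneous elements of degree $\le d$; hence $\beta(A)\le n(p-1)$.

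For $G=\OO(n)$ and $G=\SO(n)$ I would proceed as follows. Put $A:=F_n(\mathfrak{N}_p)^{G}$ and let $q:=\varphi_p(x_1^2+\dots+x_n^2)\in A^2$, an invariant of degree $2$. The key structural input (discussed below) is that $q$ is a non-zero-divisor in $A$. Granting this, multiplication by $q$ is injective, so in each degree one has an exact sequence $0\to A^{i-2}\xrightarrow{\,q\,}A^i\to (A/qA)^i\to 0$, whence $H(A/qA,t)=(1-t^2)H(A,t)=p(t)$ by Propositions~\ref{prop_HilbertSeries_O} and~\ref{prop_HilbertSeries_SO}; in particular $A/qA$ is finite-dimensional and vanishes in degrees $>\deg p(t)$. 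I would then lift a homogeneous $\CC$-basis of $A/qA$ to homogeneous elements $a_1,\dots,a_s\in A$ of degrees $\le\deg p(t)$ and check, by the usual graded Nakayama induction on degree (if $B$ is the subalgebra generated by $q,a_1,\dots,a_s$, then $A^i=B^i+qA^{i-2}$, and $A^{i-2}=B^{i-2}$ by induction, so $A^i=B^i$), that $A$ is generated as a $\CC$-algebra by $q,a_1,\dots,a_s$. Since these have degrees at most $\max\{2,\deg p(t)\}$, we get $\beta(A)\le\max\{2,\deg p(t)\}$; inserting the bounds on $\deg p(t)$ from Propositions~\ref{prop_HilbertSeries_O} and~\ref{prop_HilbertSeries_SO} (and using that $n(p-1)\ge 2$ once $p\ge 2$, respectively $n(p-2)\ge2$ once $p\ge4$, in the relevant range of $n$) reproduces exactly the stated case distinction. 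The values $\beta(A)\le 2$ for small $p$ are precisely the instances where $\deg p(t)\in\{0,1\}$: for $p=1$ one has $A=S(V)^{G}$, generated in degree $2$, and for $G=\OO(n)$, $p=2$ one has $\deg p(t)\le n(p-2)=0$, so $H(A,t)=1/(1-t^2)$ and $A=\CC[q]$ by the same Nakayama argument.

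The one nontrivial point — and the step I expect to be the main obstacle — is the claim that $q=\varphi_p(\sum_i x_i^2)$ is a non-zero-divisor in $F_n(\mathfrak{N}_p)$, hence in $A$. I would establish this by a leading-term argument: filter $F_n(\mathfrak{N}_p)$ by commutator weight, so that by Drensky's theorem underlying~(\ref{eq_decomp}) the associated graded algebra is $S(V)\otimes\varphi_p(B_n)$ with $\sum_i x_i^2$ lying in the subalgebra $S(V)$; as $S(V)$ is an integral domain, $\sum_i x_i^2$ is a non-zero-divisor in $S(V)\otimes\varphi_p(B_n)$, and an element of a filtered algebra whose principal symbol is a non-zero-divisor in the associated graded algebra is itself a non-zero-divisor. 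The delicate part is making precise that the $\GL(n)$-module decomposition~(\ref{eq_decomp}) refines to an isomorphism of associated graded algebras for the commutator filtration. Alternatively, one can bypass this entirely by observing that the argument in the proof of Proposition~\ref{prop_HilbertSeries_O} already exhibits $A$ as a free $\CC[q]$-module whose free generators are invariant elements of degree at most $\deg p(t)$, after which the bound on $\beta(A)$ is immediate.
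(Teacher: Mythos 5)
Your proof is correct and follows essentially the same route as the paper: the $\SL(n)$/$\Sp(2s)$ cases are read off from the finite-dimensionality in Proposition~\ref{prop_HilbertSeries_SLSP}, and for $\OO(n)$/$\SO(n)$ one adjoins the degree-$2$ invariant $u=x_1^2+\dots+x_n^2$ to a basis of all invariants of degree at most $\deg p(t)$ and uses $H(A,t)=p(t)/(1-t^2)$ to conclude $A^i=uA^{i-2}$ in high degrees. The only real difference is that the paper simply asserts that $u$ is a ``free generator'' (i.e., that multiplication by $u$ is injective, which is exactly what the degree count needs), whereas you supply a justification via the proper-degree filtration whose associated graded algebra is $S(V)\otimes\varphi_p(B_n)$ --- a legitimate and worthwhile filling of a step the paper leaves implicit.
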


\begin{proof}
By Proposition \ref{prop_HilbertSeries_SLSP}, the statement is clear for $G = \SL(n)$ and $G = \Sp(2s)$. Hence,  we consider the case of $G$ being one of $\OO(n)$ and $\SO(n)$. By Propositions \ref{prop_HilbertSeries_O} and \ref{prop_HilbertSeries_SO}
\[
H(F_{n}(\mathfrak{N}_p)^{G}, t) = \frac{p(t)}{1-t^2} = p(t)(1 + t^2 + t^4 + \dots + t^{2k} + \dots).
\]

$F_{n}(\mathfrak{N}_p)^{G}$ has one free generator of degree $2$, namely $u = x_1^2 + \dots + x_n^2$. Therefore, if $\{f_1, \dots, f_l \}$ is a basis for the homogeneous part $\bigoplus_{i = 0}^{\deg p(t)} F_{n}^{(i)}(\mathfrak{N}_p)^{G}$ then $\{f_1, \dots, f_l, u \}$ is a generating set (not necessarily minimal) for the whole $F_{n}(\mathfrak{N}_p)^{G}$. This proves the statement.
\end{proof}

\begin{remark} In \cite{DD2}, Domokos and Drensky give a general bound for $\beta(F_{n}(\mathfrak{N}_p)^{G})$ for any reductive group $G$ as a special case of a more general statement. The advantage of the bound that we find in Corollary \ref{coro_beta} is that it is very explicit and can be used in explicit computations.
\end{remark}

In a similar way we can consider the algebra of invariants $F_{n}(\mathfrak{N}_p)^{\mathrm{UT}(n)}$, where $\mathrm{UT}(n):=\mathrm{UT}(n, \CC)$ denotes the unitriangular group, i.e., the subgroup of $\GL(n)$ consisting of upper triangular matrices with $1$'s on the diagonal. $\mathrm{UT}(n)$ is a maximal unipotent subgroup of $\GL(n)$ and of $\SL(n)$. Using results from \cite{BBDGK} we prove the following proposition.

\begin{proposition} \label{prop_HilbertSeries_UT} Let $p \geq 1$ and let $G = \mathrm{UT}(n)$. Then,
\[
H(F_{n}(\mathfrak{N}_p)^{\mathrm{UT}(n)}, t) = \frac{p(t)}{1-t},
\]
where $p(t)$ is a polynomial such that
\begin{align*}
\deg p(t) \leq n(p-1).
\end{align*}
\end{proposition}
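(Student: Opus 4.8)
The plan is to mimic the structure of the proofs of Propositions \ref{prop_HilbertSeries_SLSP}, \ref{prop_HilbertSeries_O}, and \ref{prop_HilbertSeries_SO}, replacing the role played there by Theorem 2.2 of \cite{DH} with the corresponding branching result for the maximal unipotent subgroup $\mathrm{UT}(n)$ coming from \cite{BBDGK}. First I would write the decomposition of $F_n(\mathfrak{N}_p)$ as a $\GL(n)$-module graded by total degree,
\[
F_n(\mathfrak{N}_p) = \bigoplus_{i \geq 0} F_n^{(i)}(\mathfrak{N}_p) \cong_{\GL(n)} \bigoplus_{i \geq 0} \bigoplus_{\lambda} m_{i, \lambda} V_{\lambda},
\]
where $|\lambda| = i$ and, by Corollary \ref{coro_bound}, $\lambda_2 \leq p-1$. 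The key input from classical invariant theory is that the dimension of the space of $\mathrm{UT}(n)$-invariants in an irreducible $\GL(n)$-module $V_\lambda$ equals $1$ for every polynomial dominant weight $\lambda$ (the highest weight vector spans the invariants of the maximal unipotent). Hence
\[
H(F_{n}(\mathfrak{N}_p)^{\mathrm{UT}(n)}, t) = \sum_{i \geq 0}\left(\sum_{\lambda} m_{i, \lambda}\right)t^i,
\]
where now the inner sum runs over \emph{all} partitions $\lambda$ with $|\lambda| = i$ and $\lambda_2 \leq p-1$ (no parity or column-equality restriction, in contrast to the $\OO(n)$, $\SO(n)$, $\SL(n)$, $\Sp(2s)$ cases).

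Next I would exploit Equation (\ref{eq_decomp}) to see how the first row of $\lambda$ can grow freely. If a partition $\lambda' = (\lambda_1', \lambda_2', \dots, \lambda_n')$ with $\lambda_2' \leq p-1$ contributes, then since $F_n(\mathfrak{N}_p) \cong_{\GL(n)} S(V) \otimes \varphi_p(B_n)$ and $S(V)$ contributes all symmetric powers (i.e.\ all partitions $(k)$), Pieri's rule shows that every partition $(\lambda_1' + k, \lambda_2', \dots, \lambda_n')$ for $k = 0, 1, 2, \dots$ also contributes. Therefore the generating function of the $\mathrm{UT}(n)$-invariants splits into blocks, each of the form
\[
t^{|\lambda'|}(1 + t + t^2 + \cdots) = \frac{t^{|\lambda'|}}{1-t},
\]
indexed by the ``minimal'' partitions $\lambda'$ (those with $\lambda_1' = \lambda_2'$, or more precisely those that are not obtainable from a smaller admissible partition by enlarging the first row). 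Summing over these finitely many blocks gives $H = p(t)/(1-t)$ for a polynomial $p(t)$, and the degree bound follows: by Theorem \ref{thm_bound} (applied through $\varphi_p(B_n)$, whose first row is $\leq p-1$) together with $\lambda_1' = \lambda_2' \leq p-1$, the largest admissible minimal partition is $\lambda' = (p-1, p-1, \dots, p-1)$, whose size is $n(p-1)$, so $\deg p(t) \leq n(p-1)$.

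The main obstacle I anticipate is making rigorous the claim that the Hilbert series collapses exactly to the form $p(t)/(1-t)$ with $p(t)$ a genuine polynomial — i.e.\ that the multiplicities $\sum_\lambda m_{i,\lambda}$ grow in $i$ precisely like a quasi-polynomial with denominator $1-t$ and not faster. For this one must check that the set of ``minimal'' admissible partitions $\lambda'$ is finite; that finiteness is exactly where Corollary \ref{coro_bound} (the bound $\lambda_2 \leq p-1$) and the bound on the remaining rows $\lambda_2 \geq \lambda_3 \geq \cdots \geq \lambda_n \geq 0$ enter, so the set of admissible $(\lambda_2, \dots, \lambda_n)$ is finite, and for each such tail there is a unique minimal first-row value. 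Once finiteness of the block decomposition is in hand, the coefficient extraction and the degree estimate are routine, exactly parallel to the proof of Proposition \ref{prop_HilbertSeries_SO}. I would also need to cite the precise statement from \cite{BBDGK} that justifies computing $\mathrm{UT}(n)$-invariants as highest weight vectors; modulo that reference, the argument is a direct adaptation of the preceding propositions.
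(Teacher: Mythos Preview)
Your proposal is correct and follows essentially the same approach as the paper: the paper's proof likewise records that each $V_\lambda$ contributes a one-dimensional $\mathrm{UT}(n)$-invariant subspace (citing \cite{BBDGK}), obtains the same Hilbert series expression summed over all partitions with $\lambda_2 \leq p-1$, uses Equation~(\ref{eq_decomp}) to produce the $t^{|\lambda'|}/(1-t)$ blocks, and invokes Theorem~\ref{thm_bound} to conclude that the largest relevant $\lambda'$ is $(p-1,\dots,p-1)$. Your discussion of the ``minimal'' partitions (those with $\lambda_1' = \lambda_2'$) and the finiteness argument is in fact slightly more explicit than the paper's, which simply speaks of ``suitable $\lambda'$'' without spelling out the characterization.
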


\begin{proof}
We proceed as before. We write the decomposition of $F_n(\mathfrak{N}_p)$ as a $\GL(n)$-module in the following way:
\[
F_n(\mathfrak{N}_p) = \bigoplus_{i \geq 0} F_n^{(i)}(\mathfrak{N}_p) \cong_{\GL(n)} \bigoplus_{i \geq 0} \bigoplus_{\lambda} m_{i, \lambda} V_{\lambda},
\]
where again $|\lambda| = i$ and $\lambda_2 \leq p-1$. Each module $V_{\lambda}$ has a one-dimensional $\mathrm{UT}(n)$-invariant subspace generated by any highest weight vector. Therefore (see also \cite{BBDGK}),
\[
H(F_{n}(\mathfrak{N}_p)^{\mathrm{UT}(n)}, t) = \sum_{i \geq 0}\left(\sum_{\lambda} m_{i, \lambda}\right)t^i,
\]
where the second sum runs over partitions $\lambda$ such that $|\lambda| = i$ and $\lambda_2 \leq p-1$.

Then, as in the proof of Proposition \ref{prop_HilbertSeries_O}, we notice the following. If a partition $\lambda' = (\lambda_1', \lambda_2', \dots, \lambda_n')$ satisfies the above conditions and appears in the decomposition of $F_{n}(\mathfrak{N}_p)^{\mathrm{UT}(n)}$, then due to Equation (\ref{eq_decomp}) all partitions of the form $(\lambda_1' + k, \lambda_2', \dots, \lambda_n')$ for $k = 1, 2, \dots$ also appear in the decomposition of $F_{n}(\mathfrak{N}_p)^{\mathrm{UT}(n)}$. Thus, the following term appears in $H(F_{n}(\mathfrak{N}_p)^{\mathrm{UT}(n)}, t)$:
\[
t^{|\lambda'|}(1 + t + t^2 + t^3 + \dots) = \frac{t^{|\lambda'|}}{1-t}.
\]

We can write all terms in $H(F_{n}(\mathfrak{N}_p)^{\mathrm{UT}(n)}, t)$ in this way for a suitable $\lambda'$. Theorem \ref{thm_bound} implies that the largest suitable $\lambda'$ that satisfies the above conditions is $\lambda' = (p-1, p-1, \dots, p-1)$. This proves the statement.
\end{proof}

The algebra $F_{n}(\mathfrak{N}_p)$ is left Noetherian, hence Theorem 7.2 from \cite{DD3} implies that the algebra of invariants $F_{n}(\mathfrak{N}_p)^{\mathrm{UT}(n)}$ is finitely generated. Using Proposition \ref{prop_HilbertSeries_UT} we can give an upper bound on the degree of the generators of $F_{n}(\mathfrak{N}_p)^{\mathrm{UT}(n)}$ in a minimal generating set.

\begin{corollary} \label{coro_beta_UT}
For the algebra of invariants $F_{n}(\mathfrak{N}_p)^{\mathrm{UT}(n)}$ it holds that
\begin{align*}
\beta(F_{n}(\mathfrak{N}_p)^{\mathrm{UT}(n)}) \leq
\left \{ \begin{array} {ll}
n(p-1) & \text{if } p > 1\\
1 & \text{if } p=1.
\end{array} \right.
\end{align*}
\end{corollary}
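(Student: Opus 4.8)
The plan is to mimic exactly the argument used in Corollary \ref{coro_beta} for the orthogonal and special orthogonal cases, adapting it to the slightly different shape of the Hilbert series. First I would invoke Proposition \ref{prop_HilbertSeries_UT}, which gives
\[
H(F_{n}(\mathfrak{N}_p)^{\mathrm{UT}(n)}, t) = \frac{p(t)}{1-t} = p(t)\bigl(1 + t + t^2 + \dots + t^k + \dots\bigr),
\]
with $\deg p(t) \leq n(p-1)$. The key observation is that $F_{n}(\mathfrak{N}_p)^{\mathrm{UT}(n)}$ contains a free generator of degree $1$, namely $x_1$ itself: since $\mathrm{UT}(n)$ consists of upper triangular matrices with $1$'s on the diagonal, the first basis vector $x_1$ is fixed by the whole group (it is the highest weight vector of $V_{(1,0,\dots,0)} = V$). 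Multiplication by $x_1$ is injective on $F_{n}(\mathfrak{N}_p)$ because this algebra is a domain-free statement is not needed; what is needed is that the factor $\tfrac{1}{1-t}$ in the Hilbert series is accounted for by the subalgebra $\CC[x_1]$, which follows from Equation (\ref{eq_decomp}) exactly as in the proof of Proposition \ref{prop_HilbertSeries_UT}.

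Next I would argue that if $\{f_1, \dots, f_l\}$ is a homogeneous $\CC$-basis for the finite-dimensional truncation $\bigoplus_{i=0}^{\deg p(t)} F_{n}^{(i)}(\mathfrak{N}_p)^{\mathrm{UT}(n)}$, then $\{f_1, \dots, f_l, x_1\}$ generates $F_{n}(\mathfrak{N}_p)^{\mathrm{UT}(n)}$ as an algebra. Indeed, every homogeneous invariant of degree $i > \deg p(t)$ can be written, using the shape of the Hilbert series, as $x_1^{\,i - j}\, g$ for some invariant $g$ of degree $j \leq \deg p(t)$; more precisely, the proof of Proposition \ref{prop_HilbertSeries_UT} shows that the weight-$\lambda$ invariants with $\lambda_1 > $ (the appropriate bound) are obtained from lower ones by multiplying the first variable, i.e.\ by $x_1$. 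Hence the degree of any generator in a minimal generating set is at most $\max(\deg p(t), 1) \leq n(p-1)$ whenever $p > 1$. For $p = 1$ we have $F_n(\mathfrak{N}_1) = S(V) = \CC[x_1, \dots, x_n]$, whose $\mathrm{UT}(n)$-invariants are exactly $\CC[x_1]$ (the classical fact that the only $\mathrm{UT}(n)$-invariant polynomials are polynomials in $x_1$), which is generated in degree $1$; this gives the second line of the displayed bound.

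The one point requiring a little care -- and the place where I expect the only real obstacle -- is making rigorous the claim that "the extra powers of $t$ in $\tfrac{1}{1-t}$ correspond precisely to multiplication by $x_1$." In Proposition \ref{prop_HilbertSeries_UT} this is asserted via Equation (\ref{eq_decomp}): a $\GL(n)$-isotypic component $V_{\lambda}$ with $\lambda = (\lambda_1', \dots, \lambda_n')$ in $F_n(\mathfrak{N}_p)$ comes together with all $V_{(\lambda_1' + k, \lambda_2', \dots, \lambda_n')}$, and the highest weight vectors of the latter are $x_1^k$ times the highest weight vector of the former (up to scalar), because in the decomposition $S(V) \otimes \varphi_p(B_n)$ the symmetric-algebra factor contributes exactly the powers of $x_1$ to the first coordinate of the partition via Pieri's rule. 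So the invariant subspace $F_{n}(\mathfrak{N}_p)^{\mathrm{UT}(n)}$ is a free module over $\CC[x_1]$ with basis the highest weight vectors of those $V_{\lambda}$ that genuinely "start" in $\varphi_p(B_n) \otimes (\text{lower symmetric powers})$, and all such basis elements have degree $\leq \deg p(t) \leq n(p-1)$. Once this module structure is spelled out, the generation statement and the bound on $\beta$ follow immediately, completing the proof.
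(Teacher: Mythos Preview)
Your proposal is correct and follows essentially the same approach as the paper: identify $x_1$ as a free degree-$1$ generator, invoke Proposition~\ref{prop_HilbertSeries_UT} to write $H(F_n(\mathfrak{N}_p)^{\mathrm{UT}(n)},t)=p(t)/(1-t)$ with $\deg p(t)\le n(p-1)$, and conclude that a basis of the truncation $\bigoplus_{i=0}^{\deg p(t)} F_n^{(i)}(\mathfrak{N}_p)^{\mathrm{UT}(n)}$ together with $x_1$ generates the whole algebra. The paper is simply terser---it does not spell out the free $\CC[x_1]$-module structure or the $p=1$ case separately---but the argument is the same.
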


\begin{proof}
We notice that $x_1$ is a free generator of $F_{n}(\mathfrak{N}_p)^{\mathrm{UT}(n)}$ of degree $1$. Furthermore, by Proposition \ref{prop_HilbertSeries_UT}
\[
H(F_{n}(\mathfrak{N}_p)^{\mathrm{UT}(n)}, t) = \frac{p(t)}{1-t} = p(t)(1 + t + t^2 + t^3 + \dots).
\]
Therefore, as in the proof of Corollary \ref{coro_beta}, if $\{f_1, \dots, f_l \}$ is a basis for the homogeneous part $\bigoplus_{i = 0}^{\deg p(t)} F_{n}^{(i)}(\mathfrak{N}_p)^{\mathrm{UT}(n)}$ then $\{f_1, \dots, f_l, x_1 \}$ is a generating set (not necessarily minimal) for the whole $F_{n}(\mathfrak{N}_p)^{\mathrm{UT}(n)}$. This proves the statement.
\end{proof}

\begin{remark} Notice that for $p=2$, using results from \cite{DH}, we can compute explicitly $\beta(F_{n}(\mathfrak{N}_p)^{G})$ for $G = \SL(n)$, $\OO(n)$, $\SO(n)$, $\Sp(2s)$, and $\mathrm{UT}(n)$. We see that the bounds obtained in Corollary \ref{coro_beta} and \ref{coro_beta_UT} for $p=2$ are exact when $G = \SL(n)$, $\OO(n)$, $\SO(n)$, and $\mathrm{UT}(n)$. Namely, $\beta(F_{n}(\mathfrak{N}_2)^{\SL(n)}) = \beta(F_{n}(\mathfrak{N}_2)^{\SO(n)}) = \beta(F_{n}(\mathfrak{N}_2)^{\mathrm{UT}(n)}) = n = n(p-1)$ and $\beta(F_{n}(\mathfrak{N}_2)^{\OO(n)}) = 2$. For $G = \Sp(2s)$, we obtain $\beta(F_{2s}(\mathfrak{N}_2)^{\Sp(2s)}) = 2 < 2s(p-1)$ for $s >1$, i.e., when $G = \Sp(2s)$ the bound from Corollary \ref{coro_beta} is not exact for $p=2$.
\end{remark}

\section{Applications to Classical Invariant Theory} \label{sec_classInvTheory}

In this section we will reformulate Corollaries \ref{coro_beta} and \ref{coro_beta_UT} in the language of Classical Invariant Theory. We set again $K = \CC$. First we recall a classical result due to Weyl involving the algebra of invariants $\CC \left \langle X \right\rangle^G = T(V)^G$, where $G$ is one of $\OO(n)$ or $\Sp(2s)$. In order to state this result, we follow the notations and formulations in \cite{H}. Let $\left \langle \cdot, \cdot \right\rangle$ denote the non-degenerate symmetric (in the case of $G = \OO(n)$) or skew symmetric  (in the case of $G = \Sp(2s))$ form on $V$ preserved by the action of $G$. Using the natural identification of $V^*$ with $V$ we obtain that the form $\left \langle \cdot, \cdot \right\rangle$ is an invariant for $G$ in $V \otimes V = V^{\otimes 2}$. Following \cite{H}, we denote this invariant by $\theta_2$. For $k = 2j$ we may choose an isomorphism
\[
i_k : V^{\otimes k} \cong \otimes^j V^{\otimes 2}.
\]
Specifying $i_k$ amounts to pairing off the factors of $V^{\otimes k}$. Then, in $\otimes^j V^{\otimes 2}$ we have the invariant $\otimes ^j \theta_2$. Hence, in $V^{\otimes k}$ we have the invariant $\theta_k = i^{-1}(\otimes^j \theta_2)$. The symmetric group $S_k$ acts on $V^{\otimes k}$ by permuting the factors. Then the following result of Weyl holds:

\begin{theorem}[\cite{W},\cite{H}]
 Let $G$ be one of $\OO(n)$ or $\Sp(2s)$. If $k$ is odd, there are no $G$-invariants in $V^{\otimes k}$. If $k$ is even, the translates of $\theta_k$ by $S_k$ span the $G$-invariants in $V^{\otimes k}$.
\end{theorem}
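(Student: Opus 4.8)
The plan is to deduce this classical statement from the First Fundamental Theorem of invariant theory for systems of vectors, which is itself due to Weyl. The first step is a chain of identifications. Since the form $\langle\cdot,\cdot\rangle$ is non-degenerate and $G$-invariant, it provides a $G$-equivariant isomorphism $V\cong V^{*}$, whence $(V^{\otimes k})^{G}\cong\bigl((V^{*})^{\otimes k}\bigr)^{G}$. Moreover $(V^{*})^{\otimes k}$ is exactly the multi-homogeneous component of multidegree $(1,\dots,1)$ in the polynomial algebra $\mathcal{P}(V^{\oplus k})=\mathrm{Sym}\bigl((V^{*})^{\oplus k}\bigr)$, i.e. the space of polynomial functions on $V^{\oplus k}$ that are linear in each of the $k$ vector arguments. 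Hence $(V^{\otimes k})^{G}$ is identified with the space of $G$-invariant such multilinear functions. I would record this carefully, together with how the chosen pairing isomorphism $i_{k}$ sits inside it.

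The second step invokes the First Fundamental Theorem for the vector invariants of $\OO(n)$ and $\Sp(2s)$ (see \cite{W}, \cite{H}): the algebra $\mathcal{P}(V^{\oplus k})^{G}$ is generated by the quadratic invariants $q_{ab}(v_{1},\dots,v_{k})=\langle v_{a},v_{b}\rangle$ for $1\leq a\leq b\leq k$, with $q_{ab}=q_{ba}$ in the orthogonal case and $q_{ab}=-q_{ba}$, $q_{aa}=0$, in the symplectic case. A monomial $q_{a_{1}b_{1}}\cdots q_{a_{r}b_{r}}$ in these generators lies in the multilinear component precisely when $a_{1},b_{1},\dots,a_{r},b_{r}$ is a permutation of $1,\dots,k$; such a monomial therefore encodes a perfect matching of $\{1,\dots,k\}$, which in particular forces $k=2r$ to be even. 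If $k$ is odd there is no such monomial, so the multilinear component of $\mathcal{P}(V^{\oplus k})^{G}$ vanishes and there are no $G$-invariants in $V^{\otimes k}$. If $k=2j$, that component is spanned by the monomials attached to the perfect matchings of $\{1,\dots,k\}$.

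The third step matches this description with $\theta_{k}$ and its $S_{k}$-translates. Under the identifications of the first step, the monomial attached to the standard matching $\{(1,2),(3,4),\dots,(k-1,k)\}$ corresponds to $\theta_{k}=i_{k}^{-1}(\otimes^{j}\theta_{2})$, and, since the permutation action of $S_{k}$ on $V^{\otimes k}$ is compatible with the tensor power of the fixed isomorphism $V\cong V^{*}$, the translate $\sigma\cdot\theta_{k}$ corresponds to the monomial attached to the matching $\{(\sigma(1),\sigma(2)),\dots,(\sigma(k-1),\sigma(k))\}$, up to an overall sign in the symplectic case that does not affect spans. As every perfect matching of $\{1,\dots,k\}$ arises from the standard one under some $\sigma\in S_{k}$, the translates $\{\sigma\cdot\theta_{k}:\sigma\in S_{k}\}$ span $(V^{\otimes k})^{G}$, which finishes the argument. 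The one genuinely deep input is the First Fundamental Theorem for vectors, which I would cite rather than reprove; the only remaining care is the bookkeeping in the last step — identifying the combinatorics of pairings on the polynomial side with $\theta_{k}$ and its translates on the tensor side, and keeping track of signs in the symplectic case.
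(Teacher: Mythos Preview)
Your argument is correct and is the standard derivation of this statement from the First Fundamental Theorem for vector invariants of $\OO(n)$ and $\Sp(2s)$. Note, however, that the paper does not give its own proof of this theorem: it is simply quoted as a classical result with references to Weyl \cite{W} and Howe \cite{H}, so there is nothing in the paper to compare your proof against. What you have written is essentially the argument one finds in Howe's exposition, where the multilinear component of $\mathcal{P}(V^{\oplus k})^{G}$ is identified with $(V^{\otimes k})^{G}$ and the FFT generators $\langle v_a,v_b\rangle$ yield the perfect-matching description; your bookkeeping in the third step (matching $\theta_k$ and its $S_k$-translates with the matchings, and tracking the symplectic signs) is exactly the point that needs care, and you have handled it correctly.
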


%%%%%%%%%%%%%%%%%%%%%%%%%%%%%%%%%%
\begin{comment}
We can formulate in similar terms the analogous statement for $G = \SL(n)$, which is also well-known. Let
\[
St_n(x_1, \dots, x_n) = \sum_{\sigma \in S_n} \mathrm{sign} (\sigma) x_{\sigma(1)}\otimes \cdots \otimes x_{\sigma(n)}
\]
denote the standard polynomial of degree $n$. It is well-known that $St_n$ spans the $\SL(n)$-invariants in $V^{\otimes n}$. Therefore, for any $j \geq 1$, in $\otimes^j V^{\otimes n}$ we have the invariant $\otimes ^j St_n$. Then, as above, for $k = jn$ we may choose an isomorphism
\[
i_k : V^{\otimes k} \cong \otimes^j V^{\otimes n}.
\]
Hence, in $V^{\otimes k}$ we have the invariant $R_k = i^{-1}(\otimes^j St_n)$. We have again the action of the symmetric group $S_k$ on $V^{\otimes k}$ by permuting the factors. Then, the following well-known theorem holds.

\begin{theorem}
 Let $G = \SL(n)$. If $k$ is not a multiple of $n$, there are no $G$-invariants in $V^{\otimes k}$. If $k = jn$ for some $j \geq 1$, the translates of $R_k$ by $S_k$ span the $G$-invariants in $V^{\otimes k}$.
\end{theorem}

\end{comment}
%%%%%%%%%%%%%%%%%%%%%%%%%%%%%%%%%%%%

An analogous result can be found in the literature for $G = \SL(n)$.

Propositions \ref{prop_HilbertSeries_SLSP}, \ref{prop_HilbertSeries_O}, and \ref{prop_HilbertSeries_SO} and Corollaries \ref{coro_beta} and \ref{coro_beta_UT} lead to the following further characterization of the $G$-invariants in $T(V)$. We include also the cases $G =\SL(n)$, $\SO(n)$ and $\mathrm{UT}(n)$ in our considerations.

\begin{theorem} \label{thm_invariants_SP}
Let $G = \SL(n)$ or $G = \Sp(2s)$ (for $n=2s$). For any $p \geq 1$, if $f$ is a $G$-invariant in $T(V)$ of degree greater than $n(p-1)$ then $f \in I_{p+1}$. % or $f$ can be expressed by invariants of smaller degree.
\end{theorem}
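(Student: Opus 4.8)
The plan is to deduce the statement directly from Proposition \ref{prop_HilbertSeries_SLSP} together with the exactness of the $G$-invariants functor. Write $\varphi_p : T(V) \to F_n(\mathfrak{N}_p) = T(V)/I_{p+1}$ for the canonical surjection. Since $\CC$ has characteristic $0$ and $G$ is one of $\SL(n)$ or $\Sp(2s)$, hence reductive, and since $T(V) = \bigoplus_{d \geq 0} V^{\otimes d}$ is a direct sum of finite-dimensional rational $G$-modules, the short exact sequence $0 \to I_{p+1} \to T(V) \to F_n(\mathfrak{N}_p) \to 0$ of $G$-modules remains exact after applying $(-)^G$. Thus $\varphi_p$ restricts to a surjection $T(V)^G \twoheadrightarrow F_n(\mathfrak{N}_p)^G$ with kernel $I_{p+1} \cap T(V)^G$; in particular, an element $f \in T(V)^G$ lies in $I_{p+1}$ if and only if $\varphi_p(f) = 0$.

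Next I would use that all objects in sight are graded by total degree, that $I_{p+1}$ is a homogeneous ideal, and that $\varphi_p$ is degree-preserving; hence $T(V)^G$ and $F_n(\mathfrak{N}_p)^G$ inherit gradings compatible with $\varphi_p$. It therefore suffices to treat a homogeneous invariant $f$ of degree $d$, the general case following by applying the argument to each homogeneous component of $f$ (each of which is again a $G$-invariant). By Proposition \ref{prop_HilbertSeries_SLSP}, the Hilbert series $H(F_n(\mathfrak{N}_p)^G, t)$ is a polynomial of degree at most $n(p-1)$; this says precisely that the graded component $F_n^{(d)}(\mathfrak{N}_p)^G$ vanishes whenever $d > n(p-1)$.

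Combining the two observations: if $f \in T(V)^G$ is homogeneous of degree $d > n(p-1)$, then $\varphi_p(f) \in F_n^{(d)}(\mathfrak{N}_p)^G = 0$, so $\varphi_p(f) = 0$ and therefore $f \in I_{p+1}$. There is no real computational obstacle here; the only points that deserve care are the justification that taking $G$-invariants commutes with the quotient $T(V) \to T(V)/I_{p+1}$ (i.e. linear reductivity of $G$ in characteristic $0$, applied degree by degree), and the remark that $I_{p+1}$ is graded, which is what lets the problem reduce to the finite-dimensionality bound already established in Proposition \ref{prop_HilbertSeries_SLSP}.
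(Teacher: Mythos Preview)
Your proposal is correct and follows the same route as the paper: the paper states that Theorem \ref{thm_invariants_SP} is a direct consequence of Proposition \ref{prop_HilbertSeries_SLSP}, and you simply make explicit the step (exactness of $(-)^G$ for the reductive group $G$ in characteristic $0$) that the paper leaves implicit.
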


\begin{theorem} \label{thm_invariants_O}
Let $G = \OO(n)$.
\begin{itemize}
\item[(i)] If $f$ is a $G$-invariant in $T(V)$ of degree greater than $2$ then either $f \in I_3$ or $f$ is equal to a power of $\theta_2$.
\item[(ii)] For any $p \geq 3$ such that $p-1$ is an even number, if $f$ is a $G$-invariant in $T(V)$ of degree greater than $n(p-1)$ then either $f \in I_{p+1}$ or $f$ can be expressed by invariants of smaller degree.
\item[(iii)] For any $p \geq 3$ such that $p-1$ is an odd number, if $f$ is a $G$-invariant in $T(V)$ of degree greater than $n(p-2)$ then either $f \in I_{p+1}$ or $f$ can be expressed by invariants of smaller degree.
\end{itemize}
\end{theorem}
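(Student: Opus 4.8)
The plan is to adapt the argument behind Theorem~\ref{thm_invariants_SP}, supplying one extra ingredient to handle the fact that, unlike $F_n(\mathfrak{N}_p)^{\SL(n)}$ and $F_n(\mathfrak{N}_p)^{\Sp(2s)}$, the algebra $F_n(\mathfrak{N}_p)^{\OO(n)}$ is infinite dimensional. The elementary starting point is that for $f\in T(V)$ one has $f\in I_{p+1}$ if and only if $\varphi_p(f)=0$, so I would analyse the image $\varphi_p(f)\in F_n(\mathfrak{N}_p)^{\OO(n)}$ of a homogeneous $\OO(n)$-invariant $f$. The extra ingredient is linear reductivity of $\OO(n)$ over $\CC$: since taking $\OO(n)$-invariants is then an exact functor, the degree-preserving surjection $\varphi_p\colon T(V)\to F_n(\mathfrak{N}_p)$ restricts to a surjection $T(V)^{\OO(n)}\twoheadrightarrow F_n(\mathfrak{N}_p)^{\OO(n)}$. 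Hence every homogeneous element of $F_n(\mathfrak{N}_p)^{\OO(n)}$, in particular every member of a minimal homogeneous generating set (finite, by the theorem of Domokos and Drensky~\cite{DD}), lifts to an $\OO(n)$-invariant of $T(V)$ of the same degree; this is the device that transports polynomial relations between the quotient and $T(V)$.

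For parts (ii) and (iii), suppose $f\notin I_{p+1}$, so that $0\neq\varphi_p(f)\in F_n(\mathfrak{N}_p)^{\OO(n)}$ is homogeneous of degree $\deg f$, which by hypothesis exceeds $\beta\bigl(F_n(\mathfrak{N}_p)^{\OO(n)}\bigr)$; by Corollary~\ref{coro_beta} that bound equals $n(p-1)$ when $p-1$ is even and $n(p-2)$ when $p-1$ is odd. Then $\varphi_p(f)$ is a noncommutative polynomial, with zero constant term, in the minimal generators, all of which have degree $<\deg f$; a degree count rules out a linear term as well, so $\varphi_p(f)$ is a sum of products of at least two such lower-degree invariants. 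Lifting those generators to $G$-invariants $h_1,\dots,h_m\in T(V)^{\OO(n)}$ with $\deg h_j<\deg f$, and lifting the homogeneous relation, I obtain $f-P(h_1,\dots,h_m)\in I_{p+1}$ for a suitable homogeneous noncommutative polynomial $P$. Hence modulo $I_{p+1}$ the invariant $f$ is expressed through $G$-invariants of strictly smaller degree --- the dichotomy asserted in (ii) and (iii).

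For part (i) one has $p=2$, where I would use the precise structure of $F_n(\mathfrak{N}_2)^{\OO(n)}$ rather than just a degree bound. Choose the basis $X$ so that the $\OO(n)$-form on $V$ is $x_1^2+\dots+x_n^2$; then $u:=x_1^2+\dots+x_n^2$ equals $\varphi_2(\theta_2)$ and, by the proof of Corollary~\ref{coro_beta}, is a free generator of $F_n(\mathfrak{N}_2)^{\OO(n)}$, while (as $p-1=1$ is odd) Proposition~\ref{prop_HilbertSeries_O} gives Hilbert series $1/(1-t^2)$ --- so $F_n(\mathfrak{N}_2)^{\OO(n)}=\CC[u]$. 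Now let $f\in T(V)^{\OO(n)}$ with $\deg f>2$. If $\varphi_2(f)=0$ then $f\in I_3$ (this subsumes the case $\deg f$ odd, where Weyl's theorem~\cite{W} gives no invariant at all); otherwise $0\neq\varphi_2(f)\in\CC[u]$ is homogeneous, forcing $\deg f=2k$ with $k\ge 2$ and $\varphi_2(f)=c\,u^k$ for some $c\neq 0$, i.e.\ $f-c\,\theta_2^k\in I_3$ and $f$ coincides with a power of $\theta_2$ modulo $I_3$.

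The step I expect to be the main obstacle is the reductivity/lifting argument on which all three parts rest: one must realize a generating set of $F_n(\mathfrak{N}_p)^{\OO(n)}$ by honest $\OO(n)$-invariants of $T(V)$ in the same degrees and transport the polynomial relation correctly through $\varphi_p$. This is exactly where linear reductivity of $\OO(n)$ (and of $\SO(n)$) is essential, and it is why the analogous statement requires a separate, different treatment for a non-reductive group such as $\mathrm{UT}(n)$. A secondary technical point, peculiar to part (i), is identifying the unique algebra generator of $F_n(\mathfrak{N}_2)^{\OO(n)}$ with the image of the classical invariant $\theta_2$ rather than with some unspecified degree-$2$ invariant.
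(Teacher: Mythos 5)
Your proof is correct and follows exactly the route the paper intends: Theorem \ref{thm_invariants_O} is stated there without an explicit argument, as a direct consequence of Proposition \ref{prop_HilbertSeries_O} and Corollary \ref{coro_beta}, and your derivation --- surjectivity of $T(V)^{\OO(n)}\twoheadrightarrow F_n(\mathfrak{N}_p)^{\OO(n)}$ via linear reductivity, the degree bound on a minimal generating set, lifting the generators and the relation through $\varphi_p$, and the identification $F_n(\mathfrak{N}_2)^{\OO(n)}=\CC[u]$ with $u=\varphi_2(\theta_2)$ for part (i) --- supplies precisely the missing details. Your reading of the informally phrased conclusions (``equal to a power of $\theta_2$'' up to a scalar and modulo $I_3$, ``expressed by invariants of smaller degree'' modulo $I_{p+1}$) is the right one, since the literal statements would fail.
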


%Notice that Theorem \ref{thm_invariants_SP} holds also for $G = \SL(n)$. Using Proposition \ref{prop_HilbertSeries_SO} and Corollary \ref{coro_beta} one can state the following analogous result for $G = \SO(n)$.

\begin{theorem} \label{thm_invariants_SO}
Let $G = \SO(n)$.
\begin{itemize}
\item[(i)] If $f$ is a $G$-invariant in $T(V)$ of degree greater than $2$ then either $f \in I_2$ or $f$ can be expressed by invariants of smaller degree.
\item[(ii)] For any $p \geq 2$, if $f$ is a $G$-invariant in $T(V)$ of degree greater than $n(p-1)$ then either $f \in I_{p+1}$ or $f$ can be expressed by invariants of smaller degree.
\end{itemize}
\end{theorem}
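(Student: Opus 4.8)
The plan is to transport the degree bound of Corollary~\ref{coro_beta} for $G=\SO(n)$ from the relatively free algebra back to the tensor algebra, in the same spirit as Theorems~\ref{thm_invariants_SP} and~\ref{thm_invariants_O}. Since $\SO(n)$ is reductive, the functor of taking $\SO(n)$-invariants is exact on rational modules, so the natural graded algebra surjection $\varphi_p\colon T(V)\to F_n(\mathfrak{N}_p)$ restricts to a graded surjection $\varphi_p\colon T(V)^{\SO(n)}\to F_n(\mathfrak{N}_p)^{\SO(n)}$ whose kernel is $I_{p+1}\cap T(V)^{\SO(n)}$; in particular every homogeneous $\SO(n)$-invariant of $F_n(\mathfrak{N}_p)$ of degree $d$ lifts to a homogeneous $\SO(n)$-invariant of $T(V)$ of degree $d$.

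For part (ii), fix $p\geq 2$ and, reducing to the homogeneous case, let $f\in T(V)^{\SO(n)}$ be homogeneous of degree $\deg f>n(p-1)$. By Corollary~\ref{coro_beta} the algebra $F_n(\mathfrak{N}_p)^{\SO(n)}$ is generated by homogeneous elements of degree at most $n(p-1)$, so $\varphi_p(f)$ is a polynomial in such generators; choosing homogeneous $\SO(n)$-invariant preimages $g_1,\dots,g_r\in T(V)^{\SO(n)}$ (each of degree $<\deg f$) and a polynomial $P$ with $\varphi_p(f)=P(\varphi_p(g_1),\dots,\varphi_p(g_r))$ gives $f-P(g_1,\dots,g_r)\in\ker\varphi_p=I_{p+1}$. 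Since $f$ is homogeneous of positive degree, either $\varphi_p(f)=0$, i.e. $f\in I_{p+1}$, or $f$ agrees modulo $I_{p+1}$ with a polynomial in $\SO(n)$-invariants of strictly smaller degree, which is the assertion.

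For part (i) one treats the case $p=1$ directly, since here $F_n(\mathfrak{N}_1)=S(V)$ and the relevant bound is $2$ rather than $n(p-1)=0$. By the first fundamental theorem of classical invariant theory for $\SO(n)$ acting on one copy of the standard representation, $S(V)^{\SO(n)}=\CC[q]$ with $q=x_1^2+\cdots+x_n^2$ of degree $2$ (the determinant-type $\SO(n)$-invariants require $n$ distinct vector arguments and hence do not occur for a single vector, so $S(V)^{\SO(n)}=S(V)^{\OO(n)}$). Thus for $f\in T(V)^{\SO(n)}$ of degree $>2$, the image $\varphi_1(f)$ is a scalar multiple of $q^{\deg f/2}$ if $\deg f$ is even and is $0$ if $\deg f$ is odd; in the first case $f$ coincides modulo $I_2$ with a polynomial in the smaller-degree invariant $q$, and in the second $f\in I_2$.

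Once Corollary~\ref{coro_beta} is available, the argument is essentially bookkeeping: the two points requiring care are the exactness of the invariant functor (where reductivity of $\SO(n)$ is used) and the existence of homogeneous $\SO(n)$-invariant lifts of a generating set, both of which are standard; together with the explicit description of $S(V)^{\SO(n)}$ needed for part~(i), these are the only ingredients, and I do not expect any genuine obstacle.
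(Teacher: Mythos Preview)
Your proposal is correct and follows precisely the route the paper indicates: the paper gives no explicit proof of Theorem~\ref{thm_invariants_SO}, stating only that it (together with the neighbouring theorems) follows from Propositions~\ref{prop_HilbertSeries_SLSP}--\ref{prop_HilbertSeries_SO} and Corollaries~\ref{coro_beta}, \ref{coro_beta_UT}; your argument is exactly the unpacking of this, using $\beta(F_n(\mathfrak{N}_p)^{\SO(n)})\le n(p-1)$ for $p\ge 2$ and $\beta\le 2$ for $p=1$ together with the surjectivity of $\varphi_p$ on $\SO(n)$-invariants. The one point worth flagging is interpretive rather than mathematical: the paper's phrase ``can be expressed by invariants of smaller degree'' is to be read modulo $I_{p+1}$ (as you correctly do), not as a literal identity in $T(V)$.
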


\begin{theorem} \label{thm_invariants_UT}
Let $G = \mathrm{UT}(n)$.
\begin{itemize}
\item[(i)] If $f$ is a $G$-invariant in $T(V)$ of degree greater than $1$ then either $f \in I_2$ or $f$ can be expressed by invariants of smaller degree.
\item[(ii)] For any $p \geq 2$, if $f$ is a $G$-invariant in $T(V)$ of degree greater than $n(p-1)$ then either $f \in I_{p+1}$ or $f$ can be expressed by invariants of smaller degree.
\end{itemize}
\end{theorem}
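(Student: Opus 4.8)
The plan is to deduce this theorem from the degree bound on the generators of $F_n(\mathfrak{N}_p)^{G}$ established in Corollary \ref{coro_beta_UT}, by lifting invariants along the canonical surjection $\varphi_p\colon T(V)\to F_n(\mathfrak{N}_p)$. Parts (i) and (ii) are nothing but the cases $p=1$ and $p\ge 2$, where the relevant generator-degree bound from Corollary \ref{coro_beta_UT} is $1$ and $n(p-1)$ respectively.

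The first step is to check that the restriction of $\varphi_p$ to $G$-invariants, $T(V)^{G}\to F_n(\mathfrak{N}_p)^{G}$, is a surjective homomorphism of graded algebras. The point here --- and this is the step I expect to require the most care, since $G=\mathrm{UT}(n)$ is unipotent, so taking $G$-invariants is not exact on arbitrary short exact sequences of $G$-modules --- is that $I_{p+1}$ is a \emph{graded $\GL(n)$-submodule} of $T(V)$. Since $\GL(n)$ is reductive, the graded exact sequence $0\to I_{p+1}\to T(V)\to F_n(\mathfrak{N}_p)\to 0$ splits $\GL(n)$-equivariantly in each degree, hence in particular $\mathrm{UT}(n)$-equivariantly: there is a graded $\GL(n)$-submodule $C\subseteq T(V)$ with $T(V)=I_{p+1}\oplus C$ and $C\cong_{\GL(n)}F_n(\mathfrak{N}_p)$. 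Taking $\mathrm{UT}(n)$-invariants of this direct sum gives $T(V)^{G}=(I_{p+1})^{G}\oplus C^{G}$ with $C^{G}\xrightarrow{\ \sim\ }F_n(\mathfrak{N}_p)^{G}$ as graded vector spaces, so every homogeneous $G$-invariant of $F_n(\mathfrak{N}_p)$ of degree $d$ lifts to a homogeneous $G$-invariant of $T(V)$ of the same degree $d$.

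For the second step, take a homogeneous $f\in T(V)^{G}$ whose degree $d$ exceeds the bound of Corollary \ref{coro_beta_UT} (so $d>1$ if $p=1$, and $d>n(p-1)$ if $p\ge 2$), and put $\bar f=\varphi_p(f)$. If $\bar f=0$ then $f\in I_{p+1}$ and we are done. Otherwise $F_n(\mathfrak{N}_p)^{G}$, which is finitely generated by Theorem 7.2 of \cite{DD3}, has a homogeneous generating set concentrated in degrees at most the bound, so $\bar f$ lies in the subalgebra generated by homogeneous $G$-invariants $g_1,\dots,g_k$ of $F_n(\mathfrak{N}_p)$ each of degree $<d$; writing $\bar f=P(g_1,\dots,g_k)$ for a noncommutative polynomial $P$ and passing to homogeneous components, we may assume every monomial of $P$ has total degree $d$. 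Lifting each $g_i$ to a homogeneous $\widetilde g_i\in T(V)^{G}$ of the same degree by the first step and setting $h=P(\widetilde g_1,\dots,\widetilde g_k)\in T(V)^{G}$, the fact that $\varphi_p$ is an algebra homomorphism gives $\varphi_p(h)=\bar f=\varphi_p(f)$, so $f-h\in I_{p+1}$. Hence $f=h+(f-h)$ with $h$ expressed through $G$-invariants of strictly smaller degree and $f-h\in I_{p+1}$: if $h\in I_{p+1}$ we are in the alternative $f\in I_{p+1}$, and otherwise $f$ is expressed, modulo $I_{p+1}$, by $G$-invariants of smaller degree. This is the dichotomy asserted in (i) and (ii).

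A little low-degree bookkeeping closes the argument: for $p=1$ one has $F_n(\mathfrak{N}_1)=S(V)$ and $S(V)^{\mathrm{UT}(n)}=\CC[x_1]$, whose single generator $x_1$ of degree $1$ lifts to $x_1\in T(V)^{\mathrm{UT}(n)}$, yielding part (i); for $p\ge 2$ Corollary \ref{coro_beta_UT} directly supplies the bound $n(p-1)$, yielding part (ii).
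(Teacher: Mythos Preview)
Your proof is correct and follows the paper's approach: the paper does not give an explicit proof of this theorem but simply records it as a reformulation of Corollary~\ref{coro_beta_UT}, and you have carefully spelled out that derivation, including the surjectivity of $T(V)^{G}\to F_n(\mathfrak{N}_p)^{G}$ via the $\GL(n)$-equivariant splitting (a point worth making since $\mathrm{UT}(n)$ is not reductive). One cosmetic remark: in your second step the sub-case ``$h\in I_{p+1}$'' cannot occur, since you are already assuming $\bar f\neq 0$ and $\varphi_p(h)=\bar f$; the dichotomy in the statement is really just the split between $\bar f=0$ and $\bar f\neq 0$, with ``expressed by invariants of smaller degree'' understood modulo $I_{p+1}$.
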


\section*{Acknowledgements} I am very grateful to Vesselin Drensky for the productive discussions that we had during my work on this text.

\end{document}